\theoremstyle{plain}
\newtheorem{prop}{Proposition}
\newtheorem{thm}[prop]{Theorem}
\newtheorem{thm*}[prop]{Theorem}
\newtheorem{conj}[prop]{Conjecture}
\newtheorem{cor}[prop]{Corollary}
\newtheorem{lemma}[prop]{Lemma}
\theoremstyle{remark}
\newtheorem{remark}[prop]{Remark}
\newcommand{\Z}{{\mathbb Z}}
\newcommand{\N}{{\mathbb N}}
\newcommand{\G}{{\mathbb G}}
\newcommand{\GG}{{\mathbb G}}
\newcommand{\cD}{{\mathcal D}}
\newcommand{\PP}{{\mathbb P}}
\newcommand{\ZZ}{{\mathbb Z}}
\newcommand{\A}{{\mathbb A}}
\newcommand{\cA}{\mathcal{A}}
\newcommand{\cX}{\mathcal{X}}
\newcommand{\cV}{\mathcal{V}}
\newcommand{\cC}{\mathcal{C}}
\newcommand{\cM}{\mathcal{M}}
\newcommand{\br}{\mathbf{r}}
\newcommand{\Gm}{\mathbb{G}_{\mathbf{m}}}
\newcommand{\fM}{\mathfrak{M}}
\newcommand{\fC}{\mathfrak{C}}
\DeclareMathOperator{\Spec}{Spec}
\DeclareMathOperator{\Aut}{Aut}
\DeclareMathOperator{\GL}{GL}
\DeclareMathOperator{\st}{st}
\renewcommand{\ss}{\mathrm{ss}}
\newcommand{\qs}{\mathrm{qs}}
\renewcommand{\bar}{\overline}
\newcommand{\sing}{\mathrm{sing}}
\renewcommand{\tilde}{\widetilde}
\begin{document}
\title[Equivariant versal deformations of semistable curves]{Equivariant versal deformations of \\semistable curves}
\author[Alper]{Jarod Alper}
\author[Kresch]{Andrew Kresch}
\address[Alper]{Mathematical Sciences Institute,
Australia National University,
Canberra, ACT 0200}
\address{Humboldt-Universit\"at zu Berlin, Institut F\"ur Mathematik, Unter den Linden 6, 10099 Berlin} 
\email{jarod.alper@math.anu.edu.au}

\address[Kresch]{
Institut f\"ur Mathematik \\
Universit\"at Z\"urich \\
Winterthurerstrasse 190 \\
CH-8057 Z\"urich
} \email{andrew.kresch@math.uzh.ch}

\thanks{The first author was partially supported by the Alexander von Humboldt Foundation.
The second author was partially supported by the Swiss National Science Foundation}

\begin{abstract}  We prove that given any $n$-pointed prestable curve $C$
of genus $g$ with linearly reductive automorphism group $\Aut(C)$,
there exists an $\Aut(C)$-equivariant miniversal deformation of $C$ 
over an affine variety $W$.
 In other words, we prove that the algebraic stack 
$\fM_{g,n}$ parametrizing $n$-pointed prestable curves of genus $g$
 has an \'etale neighborhood of $[C]$ isomorphic to the quotient stack
 $[W / \Aut(C)]$.
\end{abstract}

\maketitle

\section{Introduction}
\label{sec:intro}

A fundamental question in algebraic geometry is to understand the 
relationship between arbitrary algebraic stacks and quotient stacks. 
While not every algebraic stack is a quotient stack (\cite{ehkv} and 
\cite{kresch-flattening}), it is natural to conjecture that every algebraic
stack is \'etale locally a 
quotient stack around a point with linearly reductive stabilizer.  
Precisely, we formulate the conjecture as follows:

\begin{conj} \label{conj1} Let $X$ be an algebraic stack locally 
of finite type over an algebraically closed field $k$ 
with separated and quasi-compact diagonal
such that 
$X$ has affine stabilizer groups at all closed points.  
Suppose $x \in X(k)$ has a linearly reductive stabilizer group scheme $G_x$.  
Then there exists an affine scheme $W$ over $k$ 
with an action of $G_x$, a $k$-point $w \in W$, 
and an \'etale, representable morphism
$$f \colon [W/G_x] \to X$$
such that $f(w) = x$ and $f$ induces an isomorphism of stabilizer groups at $w$. 
\end{conj}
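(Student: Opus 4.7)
The plan is to construct a formal $G_x$-equivariant slice at $x$, algebraize it, and then verify étaleness in a neighborhood. First I would reduce to the situation where $X$ is Noetherian by standard limit arguments. Under the hypotheses (affine stabilizers, separated quasi-compact diagonal), the residual gerbe at $x$ is $BG_x$ and sits inside $X$ as a locally closed substack, so I would work in a formal neighborhood of this closed immersion.

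Second, I would build the formal model. The linear reductivity of $G_x$ makes its category of representations semisimple, so Schlessinger-style equivariant deformation theory applies: I would construct a $G_x$-equivariant complete local ring $R$ with a compatible system of $n$-th order $G_x$-equivariant maps $\Spec(R/\mathfrak{m}^{n+1}) \to X$ extending $BG_x \hookrightarrow X$. The key input is that the obstruction and tangent spaces, being $G_x$-representations, have $G_x$-equivariant miniversal hulls; these assemble into a formal affine $\mathrm{Spf}(R)$ with a $G_x$-action and a morphism $[\mathrm{Spf}(R)/G_x] \to X$ that is formally étale at the closed point.

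Third comes the algebraization, which I expect to be the main obstacle. One needs to produce an honest $G_x$-equivariant affine $W$ of finite type together with a morphism $[W/G_x] \to X$ whose completion at $w$ recovers the formal data. The natural tool is Artin approximation applied to the functor sending a finitely generated $k$-algebra $A$ with $G_x$-action to the set of $G_x$-equivariant morphisms $\Spec(A) \to X$; linear reductivity of $G_x$ should let one carry out approximation in the $G_x$-equivariant category. A Tannakian viewpoint is helpful here: morphisms from $[W/G_x]$ to $X$ correspond to certain symmetric monoidal functors, and one argues that such a functor defined on the formal completion extends. The obstacle is that $X$ is not proper, so classical algebraization of coherent sheaves does not apply directly; instead, one must exploit that $BG_x$ is \emph{coherently complete} along its closed point, which requires the linearly reductive hypothesis in an essential way.

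Finally, having produced $f\colon [W/G_x] \to X$, I would verify étaleness at $w$ by comparing tangent--obstruction complexes (guaranteed by versality) and that $f$ induces an isomorphism on stabilizers by construction. Since being étale is an open condition and $f$ is automatically representable in a neighborhood where it induces isomorphisms on automorphism groups, one shrinks $W$ to a $G_x$-invariant affine open (using linear reductivity to produce $G_x$-invariant functions cutting out such a neighborhood) to conclude.
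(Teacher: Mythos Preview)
The paper does not prove Conjecture~\ref{conj1} as a general statement; it is posed as an open conjecture, and the paper establishes only the special case $X=\fM_{g,n}$ (Theorem~\ref{main-theorem}). That proof is entirely specific to moduli of curves: a combinatorial reduction (Propositions~\ref{prop:first} and~\ref{prop:finiteetale}) to exhibiting finite flat covers of \'etale neighborhoods by quotient stacks $[U/\GL_n]$ with $U$ affine, followed by an explicit construction of such neighborhoods and covers using stacks of aligned and semi-aligned log structures and the geometry of the stabilization morphism (Sections~\ref{sec:local}--\ref{sec:proof}). No formal deformation theory, equivariant Artin approximation, or Tannakian algebraization enters. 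So there is no ``paper's proof'' of the general conjecture to compare your proposal to.

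Your outline is the strategy that was later carried out (by Alper--Hall--Rydh) to prove the conjecture in general, and the shape is right, but the steps you yourself label ``the main obstacle'' are the entire content and your sketch does not supply them. Two concrete gaps: first, equivariant Artin approximation is not a formal consequence of ordinary Artin approximation---you must show that the functor of $G_x$-equivariant morphisms to $X$ is locally of finite presentation and admits approximation, and the known route goes through a Tannaka duality theorem for maps into algebraic stacks (with suitable diagonal hypotheses) rather than through an abstract ``equivariant category'' argument. Second, you invoke coherent completeness of $BG_x$, but what the algebraization step actually requires is coherent completeness of the formal quotient $[\Spf(R)/G_x]$ along its unique closed point, which is a separate and nontrivial theorem (again relying on linear reductivity). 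Until those two ingredients are provided, the passage from the formal slice to an algebraic $[W/G_x]\to X$ is an assertion, not an argument.
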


Conjecture \ref{conj1} after replacing $W$ 
with an algebraic space is a particular case of
the conjecture stated in \cite{alper-structure}. 
Similar questions were raised in \cite[\S 5]{conrad-dejong} and \cite[\S 2]{oprea-localization}.

This conjecture implies that \'etale-local properties of general 
algebraic stacks (satisfying the above hypotheses)
can be inferred from properties of algebraic stacks of the form $[\Spec(A)/G]$ 
with $G$ linearly reductive.  Such quotient stacks are particularly well understood; in particular,
many geometric properties of $[\Spec(A)/G]$ can be related to properties of the GIT
quotient $\Spec(A^G)$.
Additionally, as suggested by Rydh, 
it is possible to attach to an algebraic stack $X$ satisfying Conjecture \ref{conj1}
at a point $x\in X(k)$ a Henselian localization $\mathcal{O}^h_{X,x}$ which is a comodule algebra
over the Hopf algebra of $G_x$ such that $[\Spec(\mathcal{O}^h_{X,x})/G_x]\to X$ satisfies
analogous properties to the usual Henselization $\Spec(\mathcal{O}^h_{W,w})\to W$.

Conjecture \ref{conj1} is known to have a positive answer if 
$X$ has quasi-finite diagonal (e.g., $X$ is a Deligne-Mumford stack) 
or if $X = [U / G]$ where $U$ is a normal scheme and
$G$ is a linear algebraic group
(see \S \ref{subsec:localDM}--\ref{subsec:slicing}).
The purpose of this article is to verify Conjecture \ref{conj1} in 
an interesting and natural moduli problem which 
does not fall into one of the above cases.  
Let $\fM_{g,n}$ be the 
moduli stack of {\it prestable} curves 
(proper flat families of connected nodal curves) of 
genus $g$ with $n$ marked points.

\begin{thm} \label{main-theorem}
  Conjecture \ref{conj1} holds for $\fM_{g,n}$ for all $g, n \ge 0$.  
\end{thm}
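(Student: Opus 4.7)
My plan is to produce an $\Aut(C)$-equivariant miniversal deformation of $C$ in three stages: first, build a formal $\Aut(C)$-equivariant miniversal deformation; second, algebraize it equivariantly to obtain an $\Aut(C)$-action on an affine scheme of finite type; and third, verify that the resulting morphism from the quotient stack to $\fM_{g,n}$ is \'etale and representable near the base point.

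For the first stage, the singularities of a prestable curve $C$ are nodal, so the deformation theory of $C$ is unobstructed: $\operatorname{Ext}^2(\Omega_C, \oh_C) = 0$. The tangent space is $T^1_C = \operatorname{Ext}^1(\Omega_C, \oh_C)$, which decomposes via the local-to-global spectral sequence into a piece coming from global deformations of the pointed normalization and a piece $\bigoplus_{p \text{ node of } C} T^1_{C,p}$ of local smoothings. Schlessinger's theorem then yields a formal miniversal deformation $\cC \to \Spf R$ with $R \cong k[[x_1,\dots,x_N]]$ and $N = \dim_k T^1_C$. Since $\Aut(C)$ is linearly reductive and acts on $T^1_C$, I would lift this action order by order to an action on $R$ compatible with the deformation: at each order, the obstruction to equivariantly extending the action lies in a group-cohomology group that vanishes by linear reductivity, and the inductive step is effected by choosing an equivariant splitting of the relevant filtration into $\Aut(C)$-isotypic pieces. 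This is Rim's equivariant Schlessinger theorem.

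For the second stage, Artin's algebraization theorem produces an affine scheme $V$ of finite type, a point $v \in V$, and an actual deformation over $V$ inducing the formal one on $\widehat{V}_v$. The essential additional step is to transport the $\Aut(C)$-action from $\widehat{V}_v$ to an actual action on an \'etale neighborhood of $v$. I would achieve this via an equivariant form of Artin approximation: the $\Aut(C)$-action on $\widehat{V}_v$ together with the data of its compatibility with the deformation can be phrased as a solution to an \'etale system of equations, which one approximates by a solution defined on some \'etale neighborhood, using linear reductivity at each order to ensure that obstructions to lifting the equivariant structure vanish.

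Finally, the resulting morphism $f \colon [W/\Aut(C)] \to \fM_{g,n}$ sends $w$ to $[C]$ and induces the identity $\Aut(C) \to \Aut(C)$ on stabilizers by construction. Miniversality makes the tangent map at $w$ bijective, and since both stacks are smooth at the relevant points (by unobstructedness of nodal-curve deformations), $f$ is \'etale at $w$. By openness of the \'etale locus I may shrink $W$ to an $\Aut(C)$-invariant affine neighborhood of $w$ (which exists by linear reductivity) on which $f$ remains \'etale, and representability follows from the isomorphism of stabilizers at $w$ after possibly shrinking further. The main obstacle I expect is the equivariant algebraization in the second stage: ordinary Artin approximation supplies the algebraization of the underlying deformation, but upgrading the formal $\Aut(C)$-action to an honest action on an \'etale neighborhood---compatibly with the deformation and not merely up to finite order---requires a genuinely equivariant approximation result, and this is where the linear reductivity of $\Aut(C)$ enters most decisively.
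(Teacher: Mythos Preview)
Your proposal takes a completely different route from the paper: you attempt a direct deformation-theoretic attack (Rim's equivariant formal deformation, then equivariant algebraization, then the stack morphism), whereas the paper exploits the specific geometry of semistable curves---the stabilization morphism to $\bar{\cM}_g$, the stack of (augmented semi-)aligned log structures, and the local product decomposition of Proposition \ref{prop:fiberdiagramxtriangle}---to produce the \'etale neighborhood concretely as a fiber product of stacks of the form $[\A^r/\G_m^r]$ over an affine base coming from $\bar{\cM}_g$.

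The genuine gap in your argument is Stage~2. What you call ``an equivariant form of Artin approximation'' is not a known off-the-shelf result, and the sketch you give does not address the real difficulty: Artin approximation produces an algebraic solution agreeing with the formal one to some \emph{finite} order, but a group action must satisfy associativity \emph{exactly}. Approximating the action morphism $\Aut(C)\times V\to V$ to order $n$ does not yield an honest action, and no inductive ``order-by-order'' vanishing of obstructions repairs this, because Artin approximation is not an inductive lifting procedure of that shape. Passing from a formal equivariant structure on $\widehat{V}_v$ to an algebraic one on an \'etale neighborhood is precisely the content of Conjecture~\ref{conj1}; circumventing it requires either special geometry (the paper's route) or substantial additional machinery---coherent completeness of $[\Spf(R)/G]$ and Tannaka duality---developed only in later work of Alper, Hall, and Rydh. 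Your proposal names the obstacle correctly in its last sentence but does not overcome it.

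A secondary issue: in Stage~3 you assert that representability ``follows from the isomorphism of stabilizers at $w$ after possibly shrinking further.'' For Deligne--Mumford stacks with finite inertia this is Lemma~\ref{lem:representablelocus}, but here $\Aut(C)$ is typically positive-dimensional (e.g., contains a torus), so $[W/\Aut(C)]$ does not have finite inertia and the openness of the representable locus requires more care.
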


This theorem implies that given any $n$-pointed prestable curve $C$
of genus $g$ with linearly reductive automorphism group $\Aut(C)$, 
there exists an affine variety $W$ with an action of $\Aut(C)$ fixing a point $w \in W$
 and a miniversal deformation $\cC \to W$ of $C \cong \cC_w$
 such that there is an 
 action of $\Aut(C)$ on the total family $\cC$ compatible with the action on $W$
 and restricting to the natural action of $\Aut(C)$ on $\cC_w$.

Let $\fM_{g,n}^{\ss} \subset \fM_{g,n}$ be the open substack consisting of
semistable curves (i.e., pointed curves $(C, \{p_i\}_{i=1}^n)$ such that $\omega_C(\sum_i p_i)$ 
has non-negative degree on every component, where $\omega_C$ is the dualizing sheaf).
Since a prestable curve with a linearly reductive automorphism group,
which is not a $0$-pointed smooth curve of genus $0$, is semistable, 
Theorem \ref{main-theorem} reduces to proving that $\fM_{g,n}^{\ss}$ 
satisfies Conjecture \ref{conj1}.

The algebraic stack $\fM_{g,n}^{\ss}$ has particularly exotic properties in connection to
Conjecture \ref{conj1}. 
For instance, $\fM_{g,n}^{\ss}$ has a finite-type open substack which
is not a global quotient stack,
does not have quasi-affine diagonal, and does not admit a good moduli space 
(see \S \ref{subsec:bad-properties}).  For these reasons,
$\fM_{g,n}^{\ss}$ serves as natural test cases for Conjecture \ref{conj1}.

Finally, the technique employed to prove Theorem \ref{main-theorem},
based on stacks of log structures, reveals features
of the stack of semistable curves which may be of independent interest.
For instance, Lemma \ref{lem:localstr} gives a description
of the local structure of the stack of semistable curves near a curve whose
stabilization has just one node, in terms of a particular
zero-dimensional smooth algebraic stack which has appeared in many
settings in algebraic geometry and has been studied in detail by
Abramovich, Cadman, Fantechi, and Wise \cite{acfw}.
In particular, this provides a concrete description of the fiber of
the stabilization morphism over a stable curve with one node.  

The proof of Theorem \ref{main-theorem} proceeds by a 
sequence of reductions.
In Section \ref{sec:semistable}, we show that it suffices to prove
that $\fM_{g}^{\ss}$ satisfies Conjecture \ref{conj1} for all $g \ge 0$
(Proposition \ref{prop:first}); in that section, we also exhibit some exotic properties
of the stack $\fM_g^{\ss}$.
In Section \ref{sec:quotient}, we state a result
(Proposition \ref{prop:finiteetale}) that
reduces Conjecture \ref{conj1} for $\fM_{g}^{\ss}$ to showing the existence of
particular kinds of finite covers of \'etale neighborhoods.
Just this is enough to verify Conjecture \ref{conj1}
in the special case (Theorem \ref{main-theorem-special-case}) of a pair of smooth curves of
distinct positive genera joined by a chain of rational curves
(in Section \ref{sec:specialcase}).
After presenting some stacks generalities in Section \ref{sec:stack}
and a local construction around a strictly semistable curve in
Section \ref{sec:local}, the next task will be to formulate and prove,
in Section \ref{sec:locXg},
a structure result (Proposition \ref{prop:fiberdiagramxtriangle})
for the moduli of semistable curves
with stabilization contracting a single chain of rational curves to a node.
Finally, Theorem \ref{main-theorem} is proved in Section \ref{sec:proof}.

\subsection*{Conventions} In this paper we work with general algebraic stacks
(not required to have separated or quasi-compact diagonal) as defined in
\cite{stacksproject}.
An algebraic stack possessing an \'etale cover by a scheme is called a
Deligne-Mumford stack.
Sheaves of monoids arise in the treatment of log structures;
all (sheaves of) monoids are commutative.

\subsection*{Acknowledgements}  We are grateful for the valuable suggestions provided by the referee.  We thank Rahul Pandharipande for stimulating discussions which motivated this investigation.  We also thank the referee for useful comments and Jack Hall for providing comments on the first draft.

\section{Stable and semistable curves} \label{sec:semistable}

Fix an algebraically closed field $k$ and $g \ge 2$.  Let $\fM_{g,n}$ be the 
moduli stack of {\it prestable} curves 
(i.e., nodal, connected, proper curves) of 
genus $g$ with $n$ marked points, with stabilization morphism
\[ \st\colon \fM_{g,n}\to \bar{\cM}_{g,n},\]
which is flat,
to the stack $\bar{\cM}_{g,n}$ of stable curves of genus $g$ with $n$ marked points.
%(see \cite{behrend}).  
The algebraic stack $\fM_{g,n}$ is quasi-separated and
locally of finite type over $k$.
Let $\fM_g^{\ss} \subset \fM_g$ denote the locus of \emph{semistable} 
curves, that is, curves whose dualizing sheaf has non-negative 
multidegree. (A similar definition can also be made for $\fM_{g,n}^{\ss} \subset \fM_{g,n}$.)
We will also consider $\fM_g^{\qs}\subset \fM_g^{\ss}$, the locus of
\emph{quasistable} curves, that is, semistable curves where the
\emph{exceptional components} (smooth rational components on which the degree of
the dualizing sheaf is zero) are pairwise disjoint.

We recall, in the context of algebraic stacks of finite presentation over $k$,
a \emph{global quotient stack}, or just \emph{quotient stack}, is
a stack quotient $[U/G]$ for the action of a linear algebraic group $G$ on
a finite-type algebraic space $U$.
Choosing a faithful representation $G\to \GL_n$, we have
\begin{equation}
\label{eqn:UGGLn}
[U/G]\cong [U\times^G\GL_n/\GL_n],
\end{equation}
where $U\times^G\GL_n$ denotes the quotient of $U\times \GL_n$ by $G$, acting
as given on $U$ and by left translation on $\GL_n$, which is an
algebraic space.
So, the definition of quotient stack is unchanged if we restrict to
$\GL_n$-actions.
According to \cite[Lem.\ 2.12]{ehkv}, a quotient stack can be characterized by
the existence of a vector bundle with faithful actions of
the geometric stabilizer group schemes.
Returning to \eqref{eqn:UGGLn}, we remark that the projection
from $U\times^G\GL_n$ to $G\backslash GL_n$ is \'etale locally the projection from a product
with $U$.
When $G$ is reductive, then,
$U$ affine implies $U\times^G\GL_n$ affine.

\subsection{Boundary components}
\label{subsec:boundary-Mg}
Let $\bar{\cC}_g\to \bar{\cM}_g$ be the
universal family over the moduli stack of 
Deligne-Mumford curves of genus $g$.
The algebraic stacks $\bar{\cC}_g$ and $\bar{\cM}_g$ 
are smooth over $\Spec(k)$.  We will denote
the relative singular locus by
$$D_g:=\bar{\cC}_g^{\sing}$$
which is defined by the
Fitting ideal of the sheaf of relative differentials.  
The algebraic stack $D_g$ is smooth of
codimension $2$ in $\bar{\cC}_g$
(cf.\ \cite[\S 1]{deligne-mumford}) and is the normalization 
of the boundary divisor of $\bar{\cM}_g$.  

The irreducible 
components of the boundary divisor of $\bar{\cM}_g$ are 
indexed by unordered pairs of 
positive integers summing to $g$ and an additional 
element (labelled ``irr'' in \cite{acg}).

There is a degree 2 \'etale cover
$\widehat{D}_g \to D_g$
where $\widehat{D}_g$ parametrizes
stable curves together with a node and a choice of tangent direction
to the curve at the node.
We recall from \cite[XII.10.11]{acg} that
$$\widehat{D}_g = \bar{\cM}_{g-1,2} \sqcup \big( \bar{\cM}_{1,1} 
\times \bar{\cM}_{g-1,1} \big) \sqcup  \big( \bar{\cM}_{2,1} 
\times \bar{\cM}_{g-2,1}  \big) \sqcup \cdots \sqcup 
\big( \bar{\cM}_{g-1,1} \times \bar{\cM}_{1,1} \big),$$
where the morphism to $D_g$ is given by
by gluing sections.

\subsection{Bad properties of $\fM_g^{\ss}$} \label{subsec:bad-properties}
We start by exhibiting a few exotic properties of  $\fM_g^{\ss}$
which indicate that  $\fM_g^{\ss}$ is a particularly interesting candidate
to test the validity of Conjecture \ref{conj1}.
In fact, we restrict our attention to the finite-type open substack
$\fM_g^{\qs}$ of quasistable curves.

The inclusion $i \colon \bar{\cM}_g \hookrightarrow \fM_g^{\qs}$
has complement of codimension 2, and it follows that
pullback and pushforward by stabilization $\st^*$ and $\st_*$ give an 
equivalence of categories of vector bundles.  Since every vector bundle on $\fM_g^{\qs}$ 
is the pullback of a vector bundle
on $\bar{\cM}_g$, there is no vector bundle on $\fM_g^{\qs}$ with
faithful action on the fiber by the stabilizer at a strictly semistable curve.
So, by the characterization of global quotient stacks in terms of
vector bundles recalled above,
$\fM_g^{\qs}$ is not a global quotient stack.  
In fact, the argument of \cite[Prop.\ 5.2]{kresch-flattening} may be adapted to
establish the stronger statement
that $\fM_g^{\qs}$ does not have quasi-affine diagonal.

Now suppose $g\ge 3$ and
consider the fiber $F$ of $\st \colon \fM_g^{\qs} \to \bar{\cM}_g$ over 
a curve $C'$ with a single node and trivial automorphism group.
The fiber $F$ consists of 
two curves, $C'$ as well as the strictly semistable curve 
$C$ obtained by inserting a $\PP^1$ at the node.  
Let $U$ be a nodal cubic curve in $\PP^2$.
We now argue that the fiber $F$ may be identified with $[U / \Gm]$.
Let $(\tilde{C}', p_1, p_2)$ be the pointed normalization of $C'$ and consider the trivial family $\cX = \tilde{C}' \times \PP^1 \to \PP^1$ with sections $s_1,s_2$ corresponding to $p_1, p_2$.  Here $\GG_m$ acts on $\PP^1$ (and on $\cX$) in the standard way.  Let $\tilde{\cX}$ be the equivariant blowup of $\cX$ at both $p_1$ in the fiber over $0$ and $p_2$ in the fiber over $\infty$, and let $\tilde{s}_1, \tilde{s}_2$ be the proper transform of the sections $s_1,s_2$.   Now glue the sections $\tilde{s}_1, \tilde{s}_2$ to construct a family $\tilde{C} \to \PP^1$ such that the fibers $\tilde{C}_0$ and $\tilde{C}_{\infty}$ are isomorphic to $C$ but the generic fiber is $C'$.  Finally, since the fibers $\tilde{C}_0$ and $\tilde{C}_{\infty}$ are equivariantly isomorphic respect to {\it opposite} actions of $\GG_m$, we may glue these two fibers to construct a $\GG_m$-equivariant family of curves $\cD$ over the nodal cubic $U$.  One checks that the induced map from $[\cD / \GG_m]$ to the fiber $F$ of $\st \colon \fM_g^{\qs} \to \bar{\cM}_g$ over $C'$ is an isomorphism
(it is representable, is a monomorphism, and satisfies the valuative
criterion for properness).
It follows that there is no open substack $\cV$ of $\fM_g^{\qs}$ 
containing $[C]$ and admitting a good moduli space.

\subsection{First reduction}
We show that in order to establish Theorem \ref{main-theorem}, it suffices to show that $\fM_g^{\ss}$ satisfies Conjecture \ref{conj1}.

\begin{prop}  \label{prop:first}
If $\fM_{g}^{\ss}$ satisfies Conjecture \ref{conj1} for all $g \ge 0$, then $\fM_{g,n}$
satisfies Conjecture \ref{conj1} for all $g,n \ge 0$.
\end{prop}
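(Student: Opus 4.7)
As observed in the introduction, a prestable curve with linearly reductive automorphism group lies in $\fM_{g,n}^{\ss}$, except for the smooth $0$-pointed genus $0$ curve $\PP^1 \in \fM_{0,0}$, for which an open substack containing $[\PP^1]$ is $B\mathrm{PGL}_2 = [\Spec k/\mathrm{PGL}_2]$ and gives the required chart directly. So, given $(C, p_1, \ldots, p_n) \in \fM_{g,n}^{\ss}(k)$ with $G = \Aut(C, p_1, \ldots, p_n)$ linearly reductive, the plan is to attach a fixed pointed elliptic curve at each marked point, reducing to the hypothesis for $\fM_{g+n}^{\ss}$. Concretely, I will choose smooth pointed elliptic curves $(E_i, q_i)$, $i=1,\ldots,n$, with pairwise distinct $j$-invariants, each outside $\{0, 1728\}$ so that $\Aut(E_i, q_i) = \{\pm 1\}$, and all outside the $j$-invariants of any elliptic subcurve of $C$ (finitely many open conditions). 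Set $\tilde C := C \cup_{p_i = q_i} E_i \in \fM_{g+n}^{\ss}$. The generic choice of the $(E_i, q_i)$ ensures no automorphism of $\tilde C$ can permute the tails among themselves or swap one with a component of $C$, so $\tilde G := \Aut(\tilde C) = G \times (\Z/2)^n$ is linearly reductive and contains $G$ as a normal subgroup.

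The resulting morphism $\mu \colon \fM_{g,n}^{\ss} \to \fM_{g+n}^{\ss}$, $(C, p_i) \mapsto \tilde C$, is representable: an automorphism of $(C, p_i)$ extends uniquely, by identity on each tail, to an automorphism of $\tilde C$, yielding the injection $G \hookrightarrow \tilde G$ on stabilizers. Applying the hypothesis at $\tilde C$ furnishes an \'etale representable morphism $f \colon [W/\tilde G] \to \fM_{g+n}^{\ss}$ with $W$ affine, a $\tilde G$-fixed $w \in W$ mapping to $\tilde C$, and isomorphic stabilizers. Form the fiber product $V := \fM_{g,n}^{\ss} \times_{\fM_{g+n}^{\ss}} [W/\tilde G] = [X/\tilde G]$; by representability of $\mu$, $X$ is an algebraic space, and $X \to W$ is finite (quasi-finite onto its closed image in $W$), so $X$ is affine. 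The projection $V \to \fM_{g,n}^{\ss}$ is \'etale representable and contains a distinguished lift $x_0 \in X_w$ of $(C, p_i)$ whose $\tilde G$-stabilizer equals $G$. The fiber $X_w$ is a closed $\tilde G$-orbit of cardinality $2^n$, indexing the $\pm 1$-trivializations of the tails in $\tilde C$. Invoking an \'etale slice theorem for the action of the linearly reductive $\tilde G$ on the affine $X$ at the closed orbit $\tilde G \cdot x_0$ produces a $G$-invariant affine locally closed subscheme $W' \ni x_0$ of $X$ such that $\tilde G \times^G W' \to X$ is \'etale onto a $\tilde G$-saturated open neighborhood of $\tilde G \cdot x_0$. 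This identifies $[W'/G] \cong V$ \'etale-locally near $x_0$, and composing with $V \to \fM_{g,n}^{\ss}$ yields the desired chart $[W'/G] \to \fM_{g,n}^{\ss}$.

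The main obstacle is identifying the right reduction: the natural alternative via the universal-curve morphism $\fM_{g,n+1} \to \fM_{g,n}$ is complicated by destabilization (forgetting a marked point can produce an unstable component), whereas the elliptic-tails construction trades a controlled enlargement of the stabilizer by a factor of $(\Z/2)^n$ for the elimination of marked points. Once one verifies that $\mu$ is representable and that $\tilde G$ remains linearly reductive, the hypothesis for $\fM_{g+n}^{\ss}$ combined with a standard slice-theorem argument delivers the required chart.
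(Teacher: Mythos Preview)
Your strategy matches the paper's in spirit---reduce to $\fM_{g,n}^{\ss}$, then attach fixed pointed tails at the markings to map into an unpointed $\fM^{\ss}$---but your choice of tails makes the argument substantially harder and leaves a genuine gap.

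The paper attaches, for each $i$, an \emph{automorphism-free} smooth $1$-pointed curve $C_i$ of genus $g_i$, with the $g_i$ pairwise distinct and all strictly greater than $g$. These two conditions do real work: since every component of a genus-$g$ prestable curve has genus at most $g$, the tails can never be confused with components of the source curve for \emph{any} point of $\fM_{g,n}^{\ss}$, not merely for the chosen $C$; and automorphism-freeness forces $\Aut(\tilde C)=\Aut(C,p_i)$ exactly. The gluing morphism is then a closed immersion, and Conjecture~\ref{conj1} transfers immediately by restricting a chart $[W/G_x]\to\fM_{g'}^{\ss}$ to the closed substack.

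Your elliptic tails forfeit both advantages. Because $\Aut(E_i,q_i)=\{\pm1\}$, the stabilizer honestly grows to $\tilde G=G\times(\Z/2)^n$, and you must invoke a slice theorem to descend back to $G$. More seriously, your assertion that $X\to W$ is finite is not justified: ``quasi-finite onto its closed image'' does not imply finite (e.g.\ $\A^1\sqcup(\A^1\smallsetminus\{0\})\to\A^1$ is separated, quasi-finite, and surjective but not finite). What you need is that $\mu$ is \emph{proper}, which amounts to properness of the relevant clutching morphism for semistable curves---plausible, but not something you can simply assert. Note also that your genericity condition on $j$-invariants is imposed only relative to the fixed curve $C$; a nearby $(C',p_i')\in\fM_{g,n}^{\ss}$ may well acquire an elliptic tail with $j$-invariant equal to some $j(E_i)$, so your $\mu$ need not be a monomorphism in any neighborhood.

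All of these complications evaporate under the paper's choice of tails: with automorphism-free curves of genus exceeding $g$, the gluing is a closed immersion preserving stabilizers, and the proof is two lines.
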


\begin{proof}
An $n$-pointed prestable genus $g$ curve
$(C, \{p_i\}_{i=1}^n)$ with $g$ or $n$ positive or $C$ singular
is semistable (i.e. $\omega_C(\sum_i p_i)$ 
has non-negative multidegree) if the automorphism group $\Aut(C, \{p_i\}_{i=1}^n)$ is linearly reductive.  Therefore, to establish that $\fM_{g,n}$
satisfies Conjecture \ref{conj1} it suffices to show that the moduli stack $\fM_{g,n}^{\ss}$ of pointed semistable curves satisfies Conjecture \ref{conj1}.

For $i=1, \ldots, n$, let $C_1, \ldots, C_n$ be automorphism-free smooth 1-pointed curves 
of distinct genera $g_1, \ldots, g_n$ greater than $g$. Let $g' = g+ g_1 + \cdots + g_n$.  The morphism 
$$\mathfrak{M}_{g,n}^{\ss} \to \mathfrak{M}_{g'}^{\ss},$$
defined by attaching $C_i$ to the $i$th marked point, is a closed immersion.  
If $\mathfrak{M}_{g'}^{\ss}$ satisfies Conjecture \ref{conj1}, 
then so does $\mathfrak{M}_{g,n}^{\ss}$.
\end{proof}

\section{Quotient structure of algebraic stacks} \label{sec:quotient}

\subsection{Stabilizer preserving morphisms}
\label{subsec:stabpres}
A morphism $f\colon X\to Y$ of algebraic stacks
is \emph{stabilizer preserving} at a given
geometric point of $X$ if it induces an isomorphism of 
stabilizer group schemes at that geometric point.  
We say that $f \colon X \to Y$ is \emph{pointwise stabilizer preserving} if it 
is stabilizer preserving at all geometric points.  

\begin{lemma} \label{lem:etale}
Let $f\colon X\to Y$ be an \'etale representable morphism of
algebraic stacks. If the fiber of $f$ over every geometric point consists of a single point, then $f$ is an
isomorphism.
\end{lemma}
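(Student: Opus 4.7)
The plan is to reduce the statement to the case of algebraic spaces by smooth descent on $Y$, and then invoke the standard fact that an \'etale morphism which is universally injective and surjective is an isomorphism.

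First I would choose a smooth surjection $V \to Y$ from a scheme. Because $f$ is representable, the base change $X_V := X\times_Y V$ is an algebraic space, and $f_V \colon X_V \to V$ is \'etale. For any geometric point $\bar v \to V$ with image $\bar y \to Y$, the fiber of $f_V$ over $\bar v$ is canonically identified with the fiber of $f$ over $\bar y$, hence by hypothesis it consists of a single point. Being an isomorphism is smooth-local on the target, so it suffices to show $f_V$ is an isomorphism. I have thus reduced the lemma to its analogue for \'etale morphisms of algebraic spaces.

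Next I would show: an \'etale morphism $g \colon X' \to Y'$ of algebraic spaces, each of whose geometric fibers is a single point, is an isomorphism. Since every geometric fiber is nonempty, $g$ is surjective. Since an \'etale geometric fiber is a disjoint union of copies of the base geometric point, the hypothesis that this disjoint union is a single copy forces the fiber to agree with the base point schematically, which means $g$ is universally injective (radicial). An \'etale, universally injective morphism of algebraic spaces is an open immersion (this is standard; it appears in \cite{stacksproject}), and a surjective open immersion is an isomorphism.

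The argument is entirely formal, so there is no real obstacle; the only point to be careful about is that the representability of $f$ is what allows one to talk about the fiber of $f_V$ as an algebraic space and to identify it with the corresponding fiber of $f$. Once that identification is in place, the smooth-local nature of ``isomorphism'' and the classical characterization of \'etale monomorphisms finish the proof.
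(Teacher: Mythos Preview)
Your argument is correct. It differs from the paper's, which works directly at the level of stacks without descending to an atlas: since $f$ is \'etale and surjective, it suffices (by descent along $f$ itself) to show that the projection $X\times_Y X\to X$ is an isomorphism; the relative diagonal $X\to X\times_Y X$ is an open immersion because $f$ is unramified, and it is surjective by the single-point-fiber hypothesis, hence an isomorphism, forcing the projection to be one as well. Your route---base change to a smooth atlas, then invoke ``\'etale $+$ universally injective $\Rightarrow$ open immersion'' for algebraic spaces---is equally valid and has the advantage of appealing to a standard tagged result in \cite{stacksproject}; the paper's argument is a little more self-contained and makes the underlying reason (the diagonal is a surjective open immersion) explicit without leaving the stack world. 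Either way the content is the same: the hypothesis forces $f$ to be a monomorphism, and an \'etale surjective monomorphism is an isomorphism.
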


\begin{proof}  Since $f$ is \'etale and surjective, to show that $f$ is an isomorphism
it suffices to show that projection $X \times_Y X \to X$ is an isomorphism.  The relative
diagonal $X \to X \times_Y X$ is an isomorphism (as it is a surjective open immersion), and therefore
so is $X \times_Y X \to X$ as the composition $X \to X \times_Y X \to X$ is the identity map.
\end{proof}

\begin{lemma}
\label{lem:finiteinertia}
Let $W$ and $X$ be algebraic stacks with
finite stabilizer groups at geometric points, and let
$f\colon W\to X$
be a separated morphism.
If $X$ has finite inertia, then so does $W$.
\end{lemma}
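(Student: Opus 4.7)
The plan is to factor the inertia morphism $I_W \to W$ through $I_X \times_X W$ and show each factor is well-behaved. The composition is
\[ I_W \longrightarrow I_X \times_X W \longrightarrow W, \]
with the second map finite by base change from $I_X \to X$. For the first map, I would identify it as a pullback of the relative diagonal $\Delta_f$, which is proper because $f$ is separated.

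The technical heart of the argument is establishing the 2-Cartesian square
\[
\begin{array}{ccc}
I_W & \longrightarrow & W \\
\downarrow & & \downarrow \Delta_f \\
I_X \times_X W & \longrightarrow & W \times_X W
\end{array}
\]
where the top horizontal is the projection $(w,\alpha)\mapsto w$, the left vertical is the functoriality map $(w,\alpha)\mapsto (w, f(\alpha))$, and the bottom horizontal sends $(w,\gamma)$ with $\gamma \in \Aut(f(w))$ to $(w,w,\gamma)$. To verify this is 2-Cartesian, a $T$-point of $W \times_{W \times_X W}(I_X \times_X W)$ consists of $w, w'\in W(T)$, an automorphism $\gamma \in \Aut(f(w'))$, and a 2-isomorphism in $W \times_X W$ between $(w,w,\mathrm{id}_{f(w)})$ and $(w', w', \gamma)$, equivalently a pair of isomorphisms $\psi_1,\psi_2 \colon w \cong w'$ with $f(\psi_2)=\gamma \circ f(\psi_1)$. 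Absorbing $\psi_1$ identifies $w$ with $w'$ and packages the remaining data as $(w', \alpha)$ with $\alpha = \psi_2 \psi_1^{-1} \in \Aut(w')$ and $f(\alpha)=\gamma$, which is exactly a $T$-point of $I_W$.

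Granting the square, the left vertical $I_W \to I_X \times_X W$ is proper as a base change of the proper morphism $\Delta_f$. Composing with the finite morphism $I_X \times_X W \to W$ shows that $I_W \to W$ is proper. The hypothesis that $W$ has finite stabilizer groups at geometric points makes every geometric fibre of $I_W \to W$ (which equals the corresponding stabilizer) finite, so $I_W \to W$ is quasi-finite as well. A morphism representable by algebraic spaces that is both proper and quasi-finite is finite, by the standard result for algebraic spaces, yielding finite inertia for $W$.

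The main obstacle I anticipate is the bookkeeping behind the 2-Cartesian diagram: ensuring that the 2-isomorphism data in the pullback is encoded correctly and that absorbing the auxiliary isomorphism $\psi_1$ genuinely produces an equivalence with $I_W$, as opposed to a groupoid over it. Once this verification is complete, the remainder of the argument is a direct invocation of base change stability of properness and finiteness together with the equivalence proper $+$ quasi-finite $=$ finite.
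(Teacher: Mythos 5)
Your proof is correct and follows essentially the same route as the paper: factor $I_W \to W$ through $I_X \times_X W$, recognize the first map as a base change of the relative diagonal $\Delta_f$ of the separated morphism $f$, and use the finiteness of stabilizers. The only (immaterial) difference is that the paper applies ``proper $+$ quasi-finite $=$ finite'' to $\Delta_f$ itself, so that $I_W \to I_X \times_X W$ is already finite, whereas you carry only properness through the base change and invoke the finite-stabilizer hypothesis at the very end on $I_W \to W$.
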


\begin{proof}
The inertia $I_W\to W$ factors through
$I_X\times_XW$, with morphism
$I_W\to I_X\times_XW$ obtained via base
change from the relative diagonal of $f$, hence finite, and
$I_X\times_XW\to W$
finite by the hypothesis on $X$.
\end{proof}

\begin{lemma}
\label{lem:representablelocus}
Let $X$ be a Deligne-Mumford stack with finite inertia, let
$Y$ be an algebraic stack with
separated diagonal, and let $f\colon X\to Y$ be a morphism.
Then the largest open substack $U$ of $X$ on which the
restriction of $f$ is a representable morphism enjoys the following
characterization: the geometric points of $U$ are precisely those
at which $f$ induces
an injective homomorphism of stabilizer group schemes.
\end{lemma}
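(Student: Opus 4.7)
The plan is to identify the desired open substack $U$ in terms of the relative inertia
$$I_{X/Y}=X\times_{X\times_YX}X$$
and then read off both its openness and the pointwise characterization from properties of finite unramified morphisms. The standard facts I would invoke are: a morphism of algebraic stacks is representable (by algebraic spaces) if and only if its relative diagonal is a monomorphism, equivalently, $I_{X/Y}\to X$ is an isomorphism; and at a geometric point $x$ of $X$ the fiber of $I_{X/Y}\to X$ is precisely the kernel of the stabilizer homomorphism $\Aut(x)\to\Aut(f(x))$.

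The substantive step is to show that $I_{X/Y}\to X$ is finite and unramified. It is the preimage of the identity section under the natural morphism $g\colon I_X\to I_Y\times_YX$ induced by $f$. The source is finite over $X$ by hypothesis, hence proper and quasi-finite over $X$; the target is separated over $X$ because $I_Y\to Y$ is (this being the condition that $Y$ has separated diagonal). Consequently $g$ is itself proper (proper source over separated target) and quasi-finite (its fibers inject into those of the finite morphism $I_X\to X$), hence finite. Thus $I_{X/Y}\subset I_X$ is closed and $I_{X/Y}\to X$ is finite. Moreover $I_X\to X$ is unramified because $X$ is Deligne--Mumford, so the closed subscheme $I_{X/Y}\to X$ is unramified as well.

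Since $I_{X/Y}\to X$ is unramified, its diagonal over $X$ is an open immersion, and pulling back along the map $(\mathrm{id},e\circ\pi)\colon I_{X/Y}\to I_{X/Y}\times_X I_{X/Y}$ shows that the image $e(X)$ of the identity section is open in $I_{X/Y}$; it is also closed because $\pi\colon I_{X/Y}\to X$ is separated. Hence $I_{X/Y}\setminus e(X)$ is clopen, and its image $Z\subset X$ under the finite (hence closed) morphism $\pi$ is closed. I would set $U:=X\setminus Z$. Then $I_{X/Y}|_U\subseteq e(X)$, so $I_{X/Y}|_U=e(U)\cong U$, giving representability of $f|_U$; conversely, any open $V\subset X$ on which $f$ is representable must be disjoint from $Z$, hence contained in $U$. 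Finally, a geometric point $x\in X$ lies in $U$ exactly when $\pi^{-1}(x)=\{e(x)\}$, which by the kernel description of the fiber means exactly that $\Aut(x)\to\Aut(f(x))$ is injective.

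The only genuine obstacle is the finiteness of $g\colon I_X\to I_Y\times_YX$, which uses the separatedness of the diagonal of $Y$ in an essential way. After that, everything reduces to the standard fact that a section of a separated unramified morphism is clopen.
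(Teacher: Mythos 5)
Your argument is correct, and it takes a genuinely different (more intrinsic) route than the paper. The paper never forms the relative inertia: it chooses a smooth atlas $V\to Y$ by a separated scheme, observes via Lemma \ref{lem:finiteinertia} that $X\times_YV$ and $X\times_YS$ (with $S=V\times_YV$) have finite inertia, invokes the known description of the largest \emph{representable} open substack of a Deligne--Mumford stack with finite inertia (complement of the image of the complement of the identity in the absolute inertia, with geometric points those of trivial stabilizer), and then descends this open along the groupoid $S\rightrightarrows V$ to get $U\subset X$. You instead work directly with $I_{X/Y}=X\times_{X\times_YX}X$, prove it is finite and unramified over $X$ (this is where the separated diagonal of $Y$ and the finite inertia of $X$ enter, playing exactly the role they play in Lemma \ref{lem:finiteinertia} in the paper's version), and take $U$ to be the complement of the image of the clopen complement of the identity section; the pointwise characterization then drops out from the identification of the fibers of $I_{X/Y}\to X$ with the kernels of the stabilizer maps, which are reduced because $I_{X/Y}\to X$ is unramified. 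The underlying mechanism is the same in both proofs --- finiteness makes the bad locus closed, unramifiedness makes the identity section open --- but your version avoids the atlas and the descent step at the cost of establishing the standard equivalence ``representable by algebraic spaces iff trivial relative inertia'' and the finiteness of $I_{X/Y}\to X$. One small simplification available to you: you do not need $g\colon I_X\to I_Y\times_YX$ to be finite; since $I_Y\times_YX\to X$ is separated, its identity section is already a closed immersion, so $I_{X/Y}$ is closed in $I_X$ and hence finite over $X$ directly.
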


\begin{proof}
Let $V\to Y$ be a smooth atlas, where $V$ is a separated scheme.
The hypotheses imply that $V\to Y$ is a separated morphism.
By Lemma \ref{lem:finiteinertia}, $X\times_YV$ has finite inertia.
Similarly, letting $S=V\times_YV$ (so that $S$ is isomorphic
to an algebraic space, with
$(\mathrm{pr}_1,\mathrm{pr}_2)\colon S\rightrightarrows V$
a groupoid presentation of $Y$),
$X\times_YS$ has finite inertia.
For a Deligne-Mumford stack with finite inertia, the
largest representable open substack is the complement of the
image of the complement of the identity in the inertia stack, and its
geometric points are characterized as those having trivial stabilizer group.
The largest representable open substack of $X\times_YV$
has the same pre-image by the maps
$\mathrm{id}_X\times \mathrm{pr}_i\colon
X\times_YS\to X\times_YV$
for $i=1$, $2$,
namely the
largest representable open substack of $X\times_YS$,
and hence determines an open substack $U$ of $X$.
It is easily verified that $U$ is the largest open substack of
$X$ on which the restriction of $f$ is representable, and that
the geometric points of $U$ are precisely those at which $f$
induces an injective homomorphism of stabilizer group schemes.
\end{proof}

\subsection{Local structure of Deligne-Mumford stacks}
\label{subsec:localDM}
The following result shows that Conjecture \ref{conj1} holds
for any Deligne-Mumford stack.

\begin{lemma}
\label{lem:DM}
Given a Deligne-Mumford stack $X$
with separated diagonal and a point $x$ of $X$ having finite stabilizer:
\begin{itemize}
\item[$\mathrm{(i)}$]
There exist an affine scheme $W$, finite group $G$,
action of $G$ on $W$, \'etale representable morphism
$f\colon [W/G]\to X$, and point $y\in [W/G]$ such that
$f(y)=x$ and $f$ is stabilizer preserving at $y$.
\item[$\mathrm{(ii)}$] The group $G$ in $\mathrm{(i)}$ may be taken to be the
geometric stabilizer group at $x$.
\end{itemize}
\end{lemma}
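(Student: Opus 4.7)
The plan is to apply the local structure theorem for Deligne--Mumford stacks with finite inertia, which is a standard consequence of the Keel--Mori theorem.

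First, I would reduce to the case where $X$ has finite inertia in a neighborhood of $x$. Since $X$ has separated diagonal, the inertia morphism $I_X\to X$ is separated, and since the stabilizer at $x$ is finite, a standard argument (analyzing $I_X\to X$ after pullback to an \'etale atlas of $X$, together with the fact that the identity section of the inertia is an open immersion in the DM setting) shows that $I_X\to X$ is finite in an open neighborhood of $x$. Replacing $X$ by this neighborhood, I may assume that $X$ has finite inertia.

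Next, by the Keel--Mori theorem, $X$ admits a coarse moduli space $\pi\colon X\to M$, which is proper and a bijection on geometric points. The \'etale local structure theorem for DM stacks with finite inertia then supplies an \'etale morphism $M'\to M$ from an affine scheme meeting $\pi(x)$ such that there is an isomorphism $X\times_M M'\cong [W/G_x]$, where $W$ is an affine scheme equipped with an action of $G_x$, arranged so that there is a $G_x$-fixed geometric point of $W$ lying over $\pi(x)$. I set $f\colon [W/G_x]\to X$ to be the composition of this isomorphism with the projection $X\times_M M'\to X$. Since $M'\to M$ is an \'etale representable morphism of algebraic spaces, its base change to $X$ is \'etale and representable, and $f$ inherits these properties. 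Taking $y\in [W/G_x]$ to be the point corresponding to the $G_x$-fixed point of $W$ lying over $\pi(x)$, the stabilizer at $y$ is precisely $G_x$, so $f$ is stabilizer preserving at $y$. This establishes (i), and (ii) is immediate, as the group used in the construction is $G_x$ itself.

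The main obstacle is the construction of the quotient presentation $[W/G_x]$ in the local structure theorem: one must trivialize the residual gerbe $BG_x\hookrightarrow X$ after a suitable \'etale base change on $M$, producing a $G_x$-torsor whose total space provides $W$. This is precisely the content of the Keel--Mori-based local structure theorem and is the point where the heavy lifting of the proof resides; the rest of the argument is a formal matter of verifying that representability, \'etaleness, and the isomorphism of stabilizer groups at $y$ all transfer to $f$ from the chosen \'etale cover of the coarse moduli space.
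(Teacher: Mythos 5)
Your route---shrink $X$ so that the inertia is finite near $x$, take the Keel--Mori coarse space, and invoke the \'etale-local quotient presentation over it---is genuinely different from the paper's, which builds the chart $[W/G]$ directly from an \'etale atlas via the Rydh/Laumon--Moret-Bailly device of finite subschemes (resp.\ $G$-torsor structures) on pullbacks of the atlas, with no coarse space involved. The problem is your very first step: from ``separated diagonal and finite stabilizer at $x$'' it does \emph{not} follow that $I_X\to X$ is finite over a neighborhood of $x$, and there is no standard argument to that effect. Concretely, let $H\to\A^1$ be the \'etale separated group scheme $\A^1\sqcup(\A^1\smallsetminus\{0\})$, i.e.\ ``$\Z/2\Z$ away from the origin, trivial at the origin,'' and let $X=[\A^1/H]$ for the trivial action. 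This is a Deligne--Mumford stack, locally of finite type over $k$, with separated (even quasi-compact) diagonal; the stabilizer at $0$ is trivial, yet the inertia is $H$ itself and is not finite over any open neighborhood of $0$, since the non-identity component $\A^1\smallsetminus\{0\}$ is not proper over any such neighborhood. Unramifiedness plus separatedness of the inertia do give that the identity section is open and closed, and that \'etale-locally near a preimage of $x$ the inertia splits as (a finite part) $\sqcup$ (a part with empty fiber over $x$); but without properness you cannot shrink away the second part, and properness of the inertia is exactly the finite-inertia hypothesis you are trying to manufacture. So the reduction is circular, and the coarse space you then want to use need not exist on any neighborhood of $x$.

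Note that the conclusion of the lemma does hold in this example (take $W=\A^1$ and $G$ trivial), which is consistent with the paper's atlas-based proof: that argument only uses finiteness of the stabilizer at the single point $x$, never finiteness of the inertia. Your argument does prove the lemma under the stronger hypothesis that $X$ has finite inertia---this is essentially the content of the Remark following the lemma, which cites Abramovich--Olsson--Vistoli and Keel--Mori---and the rest of your write-up (base change along $M'\to M$, identification of the stabilizer at the fixed point, part (ii)) is fine modulo the cited local structure theorem. But as a proof of the lemma as stated, the reduction to finite inertia is a genuine gap.
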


\begin{proof}
We obtain (i) from
\cite[Prop.\ 6.11]{rydh-quotients} and \cite[Thm.\ 6.1]{lmb}.
For (ii),
a variation of the argument of \cite[Prop.\ 6.11]{rydh-quotients} in which
$\mathscr{W}_d$ ($d\in \N$) is replaced by $\mathscr{W}_G$ ($G$ a finite group),
defined as the stack over $X$ of subschemes of the
pullbacks of the given \'etale atlas $U$,
\emph{equipped with a structure of $G$-torsor},
yields $V_G\to \mathscr{W}_G$ with $\mathscr{W}_G\cong [V_G/G]$.
\end{proof}

\begin{remark}
By \cite[Prop. 3.6 and Thm. 2.19]{aov}, Conjecture \ref{conj1} holds for an algebraic stack
with finite inertia.  By applying \cite[\S 4]{keel-mori}, we see that Conjecture \ref{conj1}
in fact holds for any algebraic stack with quasi-finite and separated diagonal.
\end{remark}

\subsection{Local structure of quotient stacks}
\label{subsec:slicing}
Let $k$ be an algebraically closed field.
Given a quotient stack $[U/G]$, with $U$ affine of finite type over $k$
and point $x\in U(k)$ with linearly reductive stabilizer group scheme $G_x$,
Luna's \'etale slice theorem \cite{luna} gives rise (under suitable hypotheses)
to an \'etale morphism $[W/G_x]\to [U/G]$, where
$W$ is a $G_x$-invariant
locally closed affine subscheme of $U$ containing $x$.

\begin{lemma}
\label{lem:affineslice}
Let $U$ be an affine scheme of finite type over an algebraically closed field
$k$, with action of a smooth linear algebraic group $G$.
Then, for every point $x\in U(k)$ with linearly reductive stabilizer
group scheme $G_x$ there exists a $G_x$-invariant
locally closed affine subscheme $W\subset U$
containing $x$ such that the induced morphism
$[W/G_x]\to [U/G]$ is \'etale.
In particular, Conjecture \ref{conj1} holds for $[U/G]$.
\end{lemma}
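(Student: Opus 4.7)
The plan is to apply Luna's \'etale slice theorem \cite{luna}, in the form that accommodates linearly reductive stabilizers. First I would equivariantly embed $U$ into a finite-dimensional $G$-representation $V$; this is possible because $G$ is a linear algebraic group and $U$ is affine of finite type over $k$. Using linear reductivity of $G_x$, decompose the Zariski tangent space $T_xU$ as $T_x(G\cdot x)\oplus N$ for some $G_x$-stable complement $N$, where $T_x(G\cdot x)$ is the tangent space to the orbit (which is smooth because $G$ is smooth). Lifting $N$ to an affine subspace of $V$ through $x$ and intersecting with $U$ produces a locally closed affine $G_x$-invariant subscheme $W_0\subset U$ containing $x$: the candidate slice.

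Next I would analyze the $G$-equivariant action morphism
\[ \alpha\colon G\times^{G_x}W_0\to U,\qquad (g,w)\mapsto g\cdot w.\]
By construction the differential of $\alpha$ at $[(e,x)]$ is an isomorphism, so $\alpha$ is \'etale on a $G$-invariant open neighborhood of $[(e,x)]$. Shrinking $W_0$ to a smaller $G_x$-invariant affine open $W\ni x$, which is possible since $U$ is affine and $G_x$ is linearly reductive, yields that $\alpha|_{G\times^{G_x}W}$ is \'etale. Descending via the principal $G$-bundles $G\times^{G_x}W\to [W/G_x]$ and $U\to [U/G]$ gives an \'etale morphism $f\colon [W/G_x]\to [U/G]$. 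By construction $f$ induces the identity on stabilizer groups at $x$, and by Lemma~\ref{lem:representablelocus} the locus of representability is an open substack of $[W/G_x]$ containing $x$; shrinking $W$ once more (to a $G_x$-invariant affine open contained in the preimage of this locus) makes $f$ representable, verifying Conjecture~\ref{conj1} for $[U/G]$.

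The main obstacle is to justify the \'etale slice construction when $G_x$ is only assumed to be linearly reductive rather than reductive in the classical sense, in particular in positive characteristic where $G_x$ may be a non-smooth diagonalizable or finite group scheme. The classical proof of Luna's theorem uses Reynolds operators attached to reductive groups in characteristic zero to split equivariant short exact sequences of coherent sheaves; the corresponding splittings still exist for linearly reductive group schemes, which is precisely what is needed to construct a $G_x$-equivariant slice, verify \'etaleness of $\alpha$ on a $G$-invariant neighborhood, and shrink to a $G_x$-invariant affine open. Once these equivariant shrinking and splitting steps are in place, the descent of \'etaleness from $\alpha$ to $f$ and the verification of the properties required by Conjecture~\ref{conj1} are routine.
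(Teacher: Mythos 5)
Your overall strategy -- an equivariant embedding $U\hookrightarrow V$, a $G_x$-stable complement $N$ of $T_x(G\cdot x)$ obtained from linear reductivity, and the map $G\times^{G_x}W\to U$ -- is the same Luna-slice approach the paper takes, and your remarks about which splittings survive for linearly reductive (possibly non-smooth) $G_x$ are on target. But there is a genuine gap in the construction of the slice. You take $N$ to be a complement of $T_x(G\cdot x)$ inside $T_xU$, lift it to an affine subspace of $V$, and intersect with $U$; you then deduce \'etaleness of $\alpha$ from the fact that its differential at $[(e,x)]$ is an isomorphism. Both steps fail when $U$ is not linear. For the second: an isomorphism on tangent spaces does not imply \'etaleness at a point of a possibly singular scheme (the closed immersion $\Spec(k[t]/(t^2))\to\A^1$ induces an isomorphism on tangent spaces at the origin). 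For the first: already with $G$ trivial and $U$ the parabola $V(y-x^2)\subset\A^2=V$ at the origin, your recipe gives $N=T_xU=$ the $x$-axis, whose intersection with $U$ is the fat point $V(y,x^2)$; the resulting map to $U$ has bijective differential at $x$ but is nowhere \'etale. So the candidate $W_0$ need not be a slice at all.

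The fix is exactly the paper's opening move: since $U\hookrightarrow V$ is a $G$-equivariant closed immersion, $[U/G]\to[V/G]$ is a closed immersion and \'etale morphisms pull back, so one may assume $U=V$ from the start. Then one takes $N$ to be a $G_x$-complement of $T_x(G\cdot x)$ in $T_xV$ and sets $W_0=g^{-1}(N)$ for a $G_x$-equivariant $g\colon V\to T_xV$ \'etale at $x$ with $g(x)=0$ (in the linear case, $v\mapsto v-x$ suffices; the paper quotes Luna's lemma). Now $G\times^{G_x}W_0\to V$ is a morphism of \emph{smooth} schemes of the same dimension whose differential at $[(e,x)]$ is $T_x(G\cdot x)\oplus N\xrightarrow{\sim}T_xV$, so it is \'etale near that point; in the general case one then base changes along $[U/G]\hookrightarrow[V/G]$ to get the \'etale slice inside $U$. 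Your final shrinking step (a $G_x$-invariant affine open $W\ni x$ inside $W_0$, using exactness of $G_x$-invariants to find an invariant function cutting out the complement) and the representability/stabilizer-preservation remarks are fine and match the paper.
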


\begin{proof}
There exists a finite-dimensional linear $G$-space $V$ with
equivariant closed immersion $U\hookrightarrow V$; thus it suffices to
consider the case $U=V$.
By \cite[Lem.\ p.\ 96]{luna}, there exists a $G_x$-equivariant morphism
\[ g\colon V\to T_xV \]
which is \'etale at $x$ and satisfies $g(x)=0$.
We write
\[ T_xV=T_x(G\cdot x)\oplus N \]
for a $G_x$-representation $N$.
Then the representable morphism
$[g^{-1}(N)/G_x]\to [V/G]$
is \'etale at $x$, hence on $[W_0/G_x]$ for some
$G_x$-invariant open $W_0\subset g^{-1}(N)$ containing $x$.
Now $W$ may be taken to be any $G_x$-invariant affine neighborhood of $x$
in $W_0$.
\end{proof}

\begin{remark}
The conclusion of Lemma \ref{lem:affineslice}
is also valid if the hypothesis that $U$ is affine is replaced by the
hypothesis that $U$ is normal.
Let $x\in U(k)$ have linearly reductive stabilizer group scheme $G_x$.
Letting $G^\circ\subset G$ denote the connected component of the identity,
there exists a separated $G^\circ$-invariant open neighborhood of $x$
(e.g., the image under the action of $G^\circ\times U_0$ where
$U_0$ is any affine open neighborhood of $x$).
By Sumihiro's theorem \cite[Thm.\ 3.8]{sumihiro2}, there exists
a quasi-projective $G^\circ$-invariant neighborhood $U'$ of $x$.
Letting $H$ denote the image of $G_x$ in $G/G^\circ$ and
$G'$ the pre-image of $H$ under $G\to G/G^\circ$,
if we take $h_1$, $\dots$, $h_n\in G_x$ to be elements mapping onto $H$ then
$h_1U'\cap\dots\cap h_nU'$ is
a $G'$-invariant quasi-projective neighborhood of $x$.
So we are reduced to the case that $U$ is normal and quasi-projective.
Then there is an equivariant immersion $U\hookrightarrow \PP(V)$ for some
finite-dimensional linear $G$-space $V$, and as in the proof of
Lemma \ref{lem:affineslice} we are further reduced to the case $U=\PP(V)$.
We conclude by taking $f$ to be a $G_x$-semi-invariant homogeneous polynomial
with $f(x)\ne 0$,
invoking \cite[Lem.\ p.\ 96]{luna} to obtain
$G_x$-equivariant
\[ g\colon \PP(V)_f\to T_x\PP(V),\]
\'etale at $x$ with $g(x)=0$,
and applying the rest of the proof of Lemma \ref{lem:affineslice}.
\end{remark}

\begin{lemma} \label{lem:descent}
Let $Y$ be an algebraic stack locally of finite type
over an algebraically closed field $k$.  If there is a finite, flat cover
$f \colon X=[U / \GL_n] \to Y$ for some $n$,
where $U$ is an algebraic space (resp., an affine scheme), 
then $Y \cong [V / \GL_{n'}]$ for some $n'$, where
$V$ is an algebraic space (resp., an affine scheme).
\end{lemma}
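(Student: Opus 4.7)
The plan is to construct a vector bundle on $Y$ whose frame bundle realizes the claimed quotient presentation. Let $E$ denote the rank-$n$ vector bundle on $X=[U/\GL_n]$ associated with the $\GL_n$-torsor $U\to X$; its defining feature is that the stabilizer $\Aut(x)$ at every geometric point $x\in X$ acts faithfully on the fiber $E_x$. Setting $\mathcal{E}:=f_*E$, the representability and finite flatness of $f$ guarantees that $\mathcal{E}$ is locally free, of rank $n':=n\cdot \deg(f)$, on $Y$. Let $V\to Y$ be the associated frame bundle, a $\GL_{n'}$-torsor, so that tautologically $Y\cong [V/\GL_{n'}]$ as stacks; the entire question is whether $V$ is an algebraic space (resp.\ an affine scheme).

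By the vector-bundle criterion for quotient stacks recalled in the excerpt (\cite[Lem.~2.12]{ehkv}), $V$ is an algebraic space exactly when $\mathcal{E}$ has faithful stabilizer action at every geometric point of $Y$. To verify this, given $y\in Y(k)$, I pick any $x\in X$ with $f(x)=y$; the representability of $f$ supplies a closed immersion $\Aut(x)\hookrightarrow\Aut(y)$ of group schemes, and the fiber $\mathcal{E}_y$ splits off $\mathrm{Ind}_{\Aut(x)}^{\Aut(y)}E_x$ as a direct $\Aut(y)$-summand. A coset-by-coset analysis shows that the scheme-theoretic kernel of $\Aut(y)$ acting on this induced representation is the normal core in $\Aut(y)$ of the kernel of $\Aut(x)\to \GL(E_x)$, which is trivial since $E_x$ is faithful. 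Hence $V$ is an algebraic space and $Y\cong [V/\GL_{n'}]$ in the first case.

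For the affine case, assume $U$ is affine. Pulling $V$ back along $U\to X\to Y$ yields $V':=V\times_YU$, a $\GL_{n'}$-torsor over affine $U$, hence affine. The free $\GL_n$-action on $U$ lifts to a free action on $V'$ whose algebraic-space quotient is identified with $V\times_YX\cong [V'/\GL_n]$; since $\GL_n$ is reductive and acts freely on the affine scheme $V'$, this quotient is affine. Finally, $V\times_YX\to V$ is finite and surjective, being the base change of $f$, so Chevalley's theorem for algebraic spaces yields $V$ affine. I expect the main technical obstacle to be the faithfulness verification in the middle paragraph when the stabilizers are non-reduced or disconnected, where the coset analysis must be phrased functorially via kernels as closed subgroup schemes rather than merely on $k$-points.
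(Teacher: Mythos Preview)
Your proof takes essentially the same approach as the paper's: push forward the tautological faithful bundle, invoke the vector-bundle criterion for quotient stacks, and in the affine case run the chain $U\times_YV$ affine $\Rightarrow$ $X\times_YV$ affine (as a $\GL_n$-quotient) $\Rightarrow$ $V$ affine (Chevalley). The only difference is that where the paper simply cites \cite[proof of Lem.~2.13]{ehkv} for the faithfulness of $f_*E$, you spell out the induced-representation argument; note that in general $\mathrm{Ind}_{\Aut(x)}^{\Aut(y)}E_x$ is a \emph{subquotient} (not necessarily a direct summand) of $\mathcal{E}_y$, but this is all that is needed for your kernel computation.
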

 
 \begin{proof}
If $E$ is a
vector bundle on $X$  such that the stabilizer at every geometric point
acts faithfully on the fiber, then $f_*E$ is a vector bundle on $Y$ with
the same property (cf.\ \cite[proof of Lem.\ 2.13]{ehkv}).
If $U$ is affine, the base change $X
\times_Y V$ is also an affine scheme---indeed, as 
$U \to X$ is a $\GL_n$-bundle and 
$X \times_Y V \to X$ is a $\GL_{n'}$-bundle, 
the base change $U \times_Y V \to X \times_Y V$
is a $\GL_n$-bundle and since $U \times_Y V$ is affine, 
so is $X \times_Y V$.
Since $X \times_Y V \to V$ is finite and surjective, 
$V$ is an affine scheme.
\end{proof}

\begin{remark}
In \cite[\S 2]{rydh-noetherian}, Rydh defines an algebraic stack $X$ of finite 
presentation over $k$ to be \emph{of global type} if 
every point $x \in X(k)$ is in the image 
of an \'etale, representable morphism $[W/\GL_n] \to X$ where 
$W$ is quasi-affine.  Any algebraic stack satisfying
Conjecture \ref{conj1} which is also quasi-compact and has linearly reductive stabilizers at closed points
is therefore of global type.
\end{remark}

\subsection{Second reduction}
\label{subsec:secondreduction}
We give a result, reducing Conjecture \ref{conj1} for
$\fM_{g}^{\ss}$ to exhibiting certain finite covers
of \'etale neighborhoods.

\begin{prop}
\label{prop:finiteetale}
Let $X$ be an algebraic stack, locally of finite type over an
algebraically closed field $k$ with
separated and quasi-compact diagonal such that $X$ has affine stabilizer
groups at all closed points.
Suppose $x\in X(k)$ has linearly reductive stabilizer group scheme $G_x$,
and there exist morphisms
\[ X''\to X'\to X \]
and $x'\in X'(k)$ such that
\begin{itemize}
\item[$(\mathrm{i})$] $X''\cong [U/\GL_n]$ for some affine scheme $U$ over $k$
with action of $\GL_n$ for some $n$,
\item[$(\mathrm{ii})$] $X''\to X'$ is a finite flat cover,
\item[$(\mathrm{iii})$] $X'\to X$ is \'etale representable,
\item[$(\mathrm{iv})$] $X'\to X$ is stabilizer preserving at $x'$.
\end{itemize}
Then Conjecture \ref{conj1} holds for $X$ at the point $x$.
\end{prop}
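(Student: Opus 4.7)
The plan is to convert $X'$ into a global quotient $[V/\GL_{n'}]$ by means of Lemma \ref{lem:descent}, carve out a $G_x$-invariant affine slice $W \subset V$ through a preimage of $x$ via Lemma \ref{lem:affineslice}, and then compose the resulting étale neighborhood with $X' \to X$ to obtain the quotient presentation required by Conjecture \ref{conj1}.

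First, since $X$ is locally of finite type over $k$, I may pass to a quasi-compact open substack of $X$ containing $x$ and pull back the whole setup; this allows me to assume $X$, $X'$, $X''$ are of finite type over $k$. By hypothesis (i), $X'' \cong [U/\GL_n]$ with $U$ affine, and since $X'' \to X$ is of finite type, $U$ is of finite type over $k$. Lemma \ref{lem:descent} applied to the finite flat cover $X'' \to X'$ then yields $X' \cong [V/\GL_{n'}]$ for some $n'$ and some affine scheme $V$; that $V$ is of finite type follows from Artin-Tate, since the affine cover $X'' \times_{X'} V \to V$ is finite, surjective, and of finite type.

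Next, choose a lift $v \in V(k)$ of $x' \in X'(k)$. The $\GL_{n'}$-stabilizer of $v$ is canonically the stabilizer group of $x'$ in $X'$, which by hypothesis (iv) is identified with the linearly reductive group $G_x$. Since $\GL_{n'}$ is smooth, Lemma \ref{lem:affineslice} produces a $G_x$-invariant locally closed affine subscheme $W \subset V$ containing $v$ such that $[W/G_x] \to [V/\GL_{n'}] = X'$ is étale. The point $v$ is fixed by $G_x$, so this slice morphism is stabilizer preserving at $w := v$ (both stabilizers being $G_x$), hence representable.

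Composing with $X' \to X$ gives $f \colon [W/G_x] \to X$, which is étale and representable as both these properties are stable under composition. The point $w$ is $G_x$-fixed, $f$ sends the class of $w$ to $x$, and combining the slice map's stabilizer preserving property at $w$ with hypothesis (iv) shows that $f$ induces an isomorphism of stabilizer group schemes at $w$. This yields precisely the data required by Conjecture \ref{conj1}. The main (mild) obstacle is the reduction to the finite-type setting needed for Lemma \ref{lem:affineslice}, handled by quasi-compact restriction and Artin-Tate; the remainder is a direct assembly of Lemmas \ref{lem:descent} and \ref{lem:affineslice}.
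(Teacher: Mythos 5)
Your proposal is correct and follows exactly the paper's route: the paper's proof is precisely ``apply Lemma \ref{lem:descent} to get $X'\cong[V/\GL_{n'}]$, then conclude by Lemma \ref{lem:affineslice},'' and you have simply spelled out the details (the finite-type reduction, the choice of lift $v$, and the verification that the composite is \'etale, representable, and stabilizer preserving at $w$) that the paper leaves implicit.
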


\begin{proof}
By Lemma \ref{lem:descent} we have $X'\cong [V/GL_{n'}]$.
We conclude by Lemma \ref{lem:affineslice}.
\end{proof}

\section{Proof in a special case} \label{sec:specialcase}
We apply the above reduction steps
(Propositions \ref{prop:first} and \ref{prop:finiteetale}) and a structural result
(Lemma \ref{lem:localstr}, below) to give a proof of
Theorem \ref{main-theorem} in a special case.

\subsection{Aligned log structure}
\label{subsec:aligned}
We recall the definition of an
aligned log structure introduced in \cite{acfw}.
Let $X$ be a scheme.
Given a log structure $M\to \mathcal{O}_X$ we consider the
characteristic sheaf $\overline{M}=M/\mathcal{O}_X^*$.
If $M$ is a locally free log structure, then
there is a 
subsheaf \emph{of sets}
$\overline{M}^1\subset \overline{M}$,
whose values on stalks are
the sums of subsets of generators of the free monoids
$\overline{M}_{\bar x}$ at geometric points $\bar x\in X$.
An \emph{aligned log structure} is a locally free log structure
together with a subsheaf $\mathcal{S} \subset \overline{M}^1$, such that for every geometric point
$\bar x\in X$ there is a labeling of the generators of
$\overline{M}_{\bar x}$ as $e_1, \dots, e_n$, such that
\[\mathcal{S}_{\bar x} = \{0,e_1,e_1+e_2,\ldots,e_1+\cdots+e_n\}.\]

\begin{prop} \label{prop:alignedlog}
Let $E$ be a Deligne-Mumford stack, $E\to \bar{\cM}_g$ an \'etale morphism with
corresponding family of curves $C\to E$, and $D\subset C^{\sing}$ an
open and closed subscheme which maps isomorphically to a divisor in $E$.
Define
\[ X:=E\times_{\bar{\cM}_g} \fM_g^{\ss}, \]
and let $\fC \to X$ denote the associated family of semistable curves,
with stabilization $\fC\to C\times_EX$.
\begin{itemize}
\item[$\mathrm{(i)}$]
The stack
\[ D\times_{C^{\sing}} \fC^{\sing} \]
is, by stabilization, the normalization of $D\times_EX$.
\item[$\mathrm{(ii)}$] Suppose the family of curves $D\times_EC\to D$
is obtained by gluing some $C_0\to D$ along two sections $s_1$ and $s_2$,
as in \S \ref{subsec:boundary-Mg}.
Then there is a unique aligned log structure $(M,S)$ on $X$ whose underlying
log structure is that of the normal crossing divisor $D\times_EX$,
such that for every geometric point $\bar x$ of $D\times_EX$, corresponding to a
prestable curve whose stabilization collapses
$r$ exceptional components to the corresponding point of $D$,
the elements of $S_{\bar x}$ under the isomorphism of
$\mathrm{(i)}$ are $\bar x_1+\dots+\bar x_j$ for $0\le j\le r+1$, where
$\bar x_1$, $\dots$, $\bar x_{r+1}$ are the points of
$\fC^{\sing}$ mapping to the point of $D$,
ordered so that $\bar x_1$ is the image of $s_1$ under
the rational map
$(C_0)_{\bar x}\dashrightarrow \fC_{\bar x}$, and for every $i$
the points $\bar x_i$ and $\bar x_{i+1}$ lie on one of the exceptional components
collapsing to the point of $D$ corresponding to $\bar x$.
\end{itemize}
\end{prop}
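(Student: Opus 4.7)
The plan is to work \'etale-locally on $X$ and invoke the standard local model for deformations of a semistable curve with a chain of exceptional components collapsed by stabilization. Concretely, at a geometric point $\bar{x}$ of $X$ whose underlying curve has a chain of $r$ such components collapsing to the node cut out by $D$, one has \'etale-local coordinates on $X$ including smoothing parameters $a_0,\ldots,a_r$ for the $r+1$ chain nodes, satisfying $a_0a_1\cdots a_r = t$, where $t$ is a local equation of $D$ on $E$. In these coordinates $D\times_E X$ is locally the normal crossing divisor $\{a_0a_1\cdots a_r=0\}$, whose $r+1$ smooth branches $\{a_i=0\}$ correspond bijectively to the chain nodes.

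For part (i), near each chain node the family $\fC\to X$ is \'etale-locally of the form $xy=a_i$, so the relative singular locus $\fC^{\sing}$ is cut out by $x=y=0$ and maps isomorphically onto $\{a_i=0\}\subset X$. Hence $D\times_{C^{\sing}}\fC^{\sing}$ is \'etale-locally the disjoint union of the $r+1$ branches, and the morphism induced by stabilization identifies it with the normalization of the normal crossing divisor $D\times_E X$.

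For part (ii), by (i) the generators of $\overline{M}$ at a stalk $\bar{x}$ are canonically in bijection with the chain nodes $\bar{x}_1,\ldots,\bar{x}_{r+1}$ lying over the corresponding point of $D$. The chain inherits a canonical linear order from the global sections $s_1$ and $s_2$: one begins with the node adjacent to $s_1$ and traverses the successive exceptional components to the node adjacent to $s_2$. Defining $S_{\bar{x}}$ by the stated formula, uniqueness is immediate since the stalks are prescribed. For existence as a subsheaf, the sections ``first $j$ nodes in the chain order'' are defined \'etale-locally---\'etale base change cannot exchange $s_1$ and $s_2$---and they cut out exactly the required stalks.

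The main content is the \'etale-local consistency of the chain order under specialization: at a nearby point where some chain nodes have been smoothed away, the surviving nodes must inherit the chain order compatibly with the order at the given point, so that $S$ is genuinely a subsheaf of $\overline{M}^1$. This follows directly from the geometric description of the chain once the standard local coordinate picture is in hand.
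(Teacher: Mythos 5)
Your overall architecture matches the paper's: part (i) is soft (the paper deduces it even more quickly from smoothness of $D\times_{C^{\sing}}\fC^{\sing}$ plus finiteness and genericity of the map to $D\times_EX$, with no need for local coordinates), and for part (ii) the only real issue is whether the intrinsic chain order is compatible with specialization, so that the stalkwise prescription of $S$ actually glues to a subsheaf of $\overline{M}^1$. The problem is that you dispose of exactly this point in one sentence (``follows directly from the geometric description of the chain once the standard local coordinate picture is in hand''), whereas it is the entire content of the paper's proof of (ii). Knowing that $D\times_EX$ is \'etale-locally $\{a_0\cdots a_r=0\}$ with branches in bijection with the chain nodes of the central curve does \emph{not} by itself tell you which node of a nearby curve specializes to which node of the central curve: the identification of a node of $\fC_{\bar y}$ with a branch $\{a_i=0\}$ is made through the normalization of (i), i.e., through specialization inside $\fC^{\sing}$, and one must rule out that the exceptional components of the nearby chain degenerate in a way that scrambles this correspondence. (Your parenthetical that \'etale base change cannot exchange $s_1$ and $s_2$ only pins down the overall direction, not the internal order.)

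The paper handles this by reducing to a family over a henselian DVR and arguing geometrically in the total space $\fC_R$: it resolves the rational map $C_R\dashrightarrow\fC_R$ by an iterated blow-up of the glued curve (termination via Northcott's multiplicity result), takes the closures $W_a$ of the generic exceptional components with connected special fibers $Z_a$, uses that $\fC_R$ is Cohen--Macaulay with reduced special fiber so that the relevant points are normal points of the total space, and derives a contradiction from $\dim(Z_{a-1}\cap Z_a)=1$ if the order of specializations were ever reversed. None of this is ``direct,'' and your proposal supplies no substitute for it. A secondary issue: you also assert the local model $a_0a_1\cdots a_r=t$ relating the node-smoothing parameters to a local equation of $D$ on $E$ without justification; this is close in content to the later Lemma~\ref{lem:localstr}/Proposition~\ref{prop:fiberdiagramxtriangle}, which in the paper are consequences of the present proposition, so invoking it here without proof risks circularity. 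To repair the argument you would need either to prove the order-consistency claim along the lines of the paper's DVR argument, or to construct the \'etale-local model together with an explicit identification of the universal family as a gluing of local smoothings $u_iv_i=a_i$ (so that the surviving nodes at a nearby point are visibly the non-smoothed ones in the induced order), which again requires an approximation argument to pass from the formal versal family to an honest \'etale neighborhood.
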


\begin{proof}
Since $D\times_{C^{\sing}} \fC^{\sing}$ is smooth and
the morphism to $D\times_EX$ is finite and restricts to an isomorphism over
the stable curves in $X$, assertion $\mathrm{(i)}$ is clear.

Assertion $\mathrm{(ii)}$ quickly reduces to the following claim.
Let $R$ be a henselian discrete valuation ring with residue field $k$,
together with $C_{0,R}\to \Spec(R)$ with two sections and $\fC_R\to \Spec(R)$,
corresponding to some $\Spec(R)\to D\times_EX$; in particular, the
stabilization of $\fC_R$ is assumed to be identified with the stable curve
$C_R$ obtained by gluing from $C_{0,R}\to \Spec(R)$ with the two sections.
Denoting by $s_{1,R}$ and $s_{2,R}$ the two sections, the common composite
with the gluing is a section $\Spec(R)\to C_R$ with image
contained in $C_R^{\sing}$.
The restriction
\[ h\colon \Spec(R)\times_{C_R^{\sing}}\fC_R^{\sing}\to \Spec(R) \]
of the finite unramified morphism of $\mathrm{(i)}$ is necessarily the
projection of a disjoint union of some copies of $\Spec(R)$ and some
(possibly nonreduced) points mapping to the closed point of
$\Spec(R)$.
Let the fiber over the closed point be $x_1$, $\dots$, $x_{r+1}$ as in
$\mathrm{(ii)}$, and
let
\[ i_1<i_2<\dots<i_{q+1} \]
be the indices of those points over the closed point, which are the
specializations of copies of $\Spec(R)$.
The claim is that at a geometric general point $\bar\eta$ of $\Spec(R)$,
stabilization of $\fC_{\bar\eta}$ collapses a chain of $q$ rational curves
to the node of $C_{\bar\eta}$ (the common image of the two sections under gluing),
and with the points $\bar\eta_1$, $\dots$, $\bar\eta_{q+1}$ as in
$\mathrm{(ii)}$ the point $\bar\eta_a$ maps to the $i_a$-copy of $\Spec(R)$
for $a=1$, $\dots$, $q+1$.

There is nothing to prove if $q=0$, so we assume $q\ge 1$.
There is an iterated blow-up $\tau\colon \widehat{C}_{0,R}\to C_{0,R}$
at points over $c_1:=s_{1,R}(\Spec(k))$ and $c_2:=s_{2,R}(\Spec(k))$ such that the
composite with gluing and the inverse of stabilization
\[ \widehat{C}_{0,R}\stackrel{\tau}\to C_{0,R}\to C_R\dashrightarrow \fC_R \]
is defined in a neighborhood of
$\tau^{-1}(\{c_1,c_2\})$.
(The process of blowing up points of indeterminacy terminates.
A classical result of Northcott asserts that upon blowing up and replacing an
$\mathfrak{m}_{c_i}$-primary ideal in $\mathcal{O}_{C_{0,R},c_i}$
by its proper transform the multiplicity decreases; cf.\ \cite{huneke}.)
Now $\tau^{-1}(c_1)$ is a connected scheme, whose image $Z_0\subset \fC_R$
contains $x_1$ and the specialization of $\eta_1$.

For each $1\le a\le q$ the generic fiber $\fC_\eta$ has a unique
exceptional component $E_{\eta,a}$ containing $\eta_a$ and $\eta_{a+1}$.
We let $W_a$ denote the closure of $E_{\eta,a}$ in $\fC_R$.
The special fiber $Z_a$ of $W_a$ is connected and contains the
specializations of $\eta_a$ and $\eta_{a+1}$.

Since $\fC_R$ is Cohen-Macaulay with reduced special fiber,
the complement of the closure of $\fC_\eta^{\sing}$ is
normal.
In particular, each $x_i$ with $i\notin \{i_1,\dots,i_{q+1}\}$ is a
normal point of $\fC_R$, as are all smooth points of the
special fiber of $\fC_R$.

With these observations, we may establish the claim by contradiction.
Suppose there is some $a$, which we take to be minimal,
such that $\eta_a$ specializes to $x_i$ and
$\eta_{a+1}$ specializes to $x_j$, with $j<i$.
If $a=1$ then since $Z_0$ is connected and contains $x_1$ and $x_i$
we have $x_j\in Z_0$ as well, and then since
$Z_1$ is connected and $1$-dimensional, we would have $\dim(Z_0\cap Z_1)=1$,
which is a contradiction.
If $a>1$, we may argue similarly, using that $Z_0\cup\dots\cup Z_{a-1}$ is
connected and contains $x_1$ and $x_i$, hence must as well contain $x_j$.
\end{proof}

\subsection{Separating nodes with distinct genera}
\label{subsec:separating}
From \S \ref{subsec:boundary-Mg}, every partition of $g$ as a sum of two
positive integers determines an irreducible component of the boundary divisor
of $\bar{\cM}_g$.
Suppose $g_1+g_2=g$ with $g_1>g_2$.
Then the corresponding boundary component has an open substack
$D_{g_1,g_2}$ parametrizing unions of smooth curves of genera $g_1$ and $g_2$
at a node.
We define
\[ E_{g_1,g_2}:=\cM_g\cup D_{g_1,g_2}\subset \bar{\cM}_g, \]
the open substack consisting of smooth curves of genus $g$ and unions
of smooth curves of genera $g_1$ and $g_2$.
Then the inclusion of $E_{g_1,g_2}$ in $\bar{\cM}_g$ and
$D_{g_1,g_2}$ satisfy the hypotheses of Proposition \ref{prop:alignedlog},
hence determine an aligned log structure on
\[ X_{g_1,g_2}:=E_{g_1,g_2}\times_{\bar{\cM}_g} \fM_g^{\ss}, \]
i.e., a morphism
\begin{equation}
\label{allogstr}
X_{g_1,g_2}\to \mathcal{L}og^{\mathrm{al}}
\end{equation}
to the stack of aligned log structures; the stack
of aligned log structures is described in \cite{acfw}.

\begin{thm}
\label{main-theorem-special-case}
For positive integers $g_1$, $g_2$, and $g$, with $g_1+g_2=g$ and $g_1>g_2$,
Conjecture \ref{conj1} holds for $X_{g_1,g_2}$.
\end{thm}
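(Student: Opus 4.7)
The approach is to verify the hypotheses of Proposition \ref{prop:finiteetale} at each $x\in X_{g_1,g_2}(k)$ having linearly reductive stabilizer. Let $C$ denote the corresponding semistable curve, $C'$ its stabilization, and $r\ge 0$ the length of the chain of exceptional components contracted by $C\to C'$. When $r=0$, the curve $C=C'$ is smooth stable and $X_{g_1,g_2}$ coincides with $E_{g_1,g_2}$ in a neighborhood of $x$; Lemma \ref{lem:DM}(ii) then directly produces an \'etale neighborhood of the form $[W/\Aut(C)]$ with $W$ affine, concluding this case. Henceforth assume $r\ge 1$; since $g_1>g_2$ rules out the interchange of the two components of $C'$ of genera $g_1$ and $g_2$, we have $\Aut(C)\cong \Aut(C')\times \Gm^r$.

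To produce $X'$ of Proposition \ref{prop:finiteetale}, apply Lemma \ref{lem:DM}(ii) to $\bar{\cM}_g$ at $[C']$: one obtains an \'etale, representable, stabilizer-preserving morphism $[W_0/H]\to \bar{\cM}_g$ with $H:=\Aut(C')$ finite and $W_0$ affine, together with a point $w_0\in W_0$ above $[C']$. Shrinking $W_0$ to its $H$-invariant open preimage of $E_{g_1,g_2}$, set
\[ X':=[W_0/H]\times_{E_{g_1,g_2}} X_{g_1,g_2}. \]
Then $X'\to X_{g_1,g_2}$ is \'etale and representable by base change, and the 2-Cartesian square identifies automorphism groups so that stabilizer preservation at a preimage $x'$ of $x$ is inherited from that of $[W_0/H]\to E_{g_1,g_2}$ at $w_0$.

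To produce $X''$, compose with the aligned log structure morphism \eqref{allogstr} to obtain $X'\to \mathcal{L}og^{\mathrm{al}}$, whose image near $x'$ lies in the connected stratum parametrizing aligned log structures of the chain type determined by $C$. According to \cite{acfw}, this stratum is a smooth zero-dimensional algebraic stack admitting a finite flat cover by an affine quotient stack $[T/\GL_m]$. We set
\[ X'':=X'\times_{\mathcal{L}og^{\mathrm{al}}} [T/\GL_m], \]
which is finite flat over $X'$. Invoking Proposition \ref{prop:alignedlog} one identifies the combined morphism $X'\to [W_0/H]\times_k \mathcal{L}og^{\mathrm{al}}$ as smooth in a neighborhood of $x'$, so that $X''$ is \'etale-locally a quotient $[V/(H\times \GL_m)]$ with $V$ affine; the identification \eqref{eqn:UGGLn} then presents this as $[U/\GL_n]$ with $U$ affine.

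The main obstacle is the construction and analysis of $X''$: concretely, extracting from \cite{acfw} a suitable finite flat affine-quotient-stack cover of the relevant component of $\mathcal{L}og^{\mathrm{al}}$, and using Proposition \ref{prop:alignedlog} to deduce that the pullback is \'etale-locally of the form $[U/\GL_n]$ with $U$ affine. The strict inequality $g_1>g_2$ enters here by ensuring the aligned ordering of the chain is canonical, obviating any further symmetry-breaking \'etale cover. Granted these structural inputs, Proposition \ref{prop:finiteetale} yields Conjecture \ref{conj1} for $X_{g_1,g_2}$ at $x$.
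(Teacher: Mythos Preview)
Your overall architecture---apply Proposition~\ref{prop:finiteetale}, handle the stable case with Lemma~\ref{lem:DM}, and in the strictly semistable case combine a local \'etale chart of $E_{g_1,g_2}$ with the aligned log structure---is the same as the paper's. The $r=0$ case and the construction of your $X'=[W_0/H]\times_{E_{g_1,g_2}}X_{g_1,g_2}$ are fine. The gap is in the construction of $X''$.

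Your claim that ``the image near $x'$ lies in the connected stratum parametrizing aligned log structures of the chain type determined by $C$'' is false: the rank of the log structure on $X_{g_1,g_2}$ jumps (it is $r+1$ at $x'$ but drops to $0$ on the stable locus and takes all intermediate values nearby), so $X'\to\mathcal{L}og^{\mathrm{al}}$ does not factor through any zero-dimensional stratum. Consequently the pullback $X'\times_{\mathcal{L}og^{\mathrm{al}}}[T/\GL_m]$ along a cover of a single stratum is not finite flat over $X'$, and the rest of the argument collapses. A second issue: even the relevant stratum, which is $B\Gm^{r+1}$, does not admit a finite flat cover by an affine scheme, so the role of $[T/\GL_m]$ is unclear.

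What the paper does instead is use two separate ingredients on the two factors. On the log side one takes the \emph{\'etale representable} chart $\mathcal{A}^{r+1}\to\mathcal{L}og^{\mathrm{al}}$ from \cite{acfw} (not a finite flat cover), and this already goes into the definition of $X'$:
\[
X'=\mathcal{A}^{r+1}\times_{\mathcal{L}og^{\mathrm{al}}}X_{g_1,g_2}\times_{E_{g_1,g_2}}[W/H].
\]
The finite flat cover $X''\to X'$ then comes from the \emph{other} side, namely the $H$-torsor $W\to[W/H]$. The point you are missing is Lemma~\ref{lem:localstr}, which identifies $X_{g_1,g_2}$ with $\mathcal{L}og^{\mathrm{al}}\times_{\mathcal{A}^1}E_{g_1,g_2}$; this collapses
\[
X''=\mathcal{A}^{r+1}\times_{\mathcal{L}og^{\mathrm{al}}}X_{g_1,g_2}\times_{E_{g_1,g_2}}W
\;\cong\;\mathcal{A}^{r+1}\times_{\mathcal{A}^1}W,
\]
and since $W\to\mathcal{A}^1$ is affine this is $[U/\Gm^{r+1}]$ with $U$ affine. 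Without Lemma~\ref{lem:localstr} there is no way to see that $X''$ has the required quotient form.
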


Key to the proof is the following observation, which reveals the
structure of the moduli stack of semistable curves of genus $g$ near a curve
whose stabilization is a union of smooth curves of genera $g_1$ and $g_2$.
The quotient stack
\[ \mathcal{A}^1:=[\A^1/\G_m], \]
where $\G_m$ acts in the standard way on $\A^1$,
plays an important role.
As is recalled in \cite{acfw},
the stack $\mathcal{A}^1$ serves as a universal target for pairs
consisting of a smooth variety or algebraic stack $Y$ with divisor $D\subset Y$.
If $D$ is also smooth, then the corresponding morphism $Y\to \mathcal{A}^1$
is smooth.

\begin{lemma}
\label{lem:localstr}
The projection from $X_{g_1,g_2}$ and morphism \eqref{allogstr} fit into a
fiber square
\[
\xymatrix{
X_{g_1,g_2}\ar[r]\ar[d]  & E_{g_1,g_2}\ar[d] \\
\mathcal{L}og^{\mathrm{al}} \ar[r] & \mathcal{A}^1
}
\]
where the morphisms to $\mathcal{A}^1$ correspond to the boundary divisor
of $\mathcal{L}og^{\mathrm{al}}$ and the divisor
$D_{g_1,g_2}\subset E_{g_1,g_2}$.
\end{lemma}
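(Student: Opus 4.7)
The aligned log structure on $X_{g_1,g_2}$ provided by Proposition \ref{prop:alignedlog}(ii), combined with the projection to $E_{g_1,g_2}$, yields a canonical morphism
\[ \Phi\colon X_{g_1,g_2}\longrightarrow \mathcal{L}og^{\mathrm{al}}\times_{\mathcal{A}^1}E_{g_1,g_2}, \]
the required compatibility with $\mathcal{A}^1$ being immediate since the underlying log structure of the aligned one is that of the normal-crossings divisor $D\times_EX$, which is also the pullback of the divisor $D_{g_1,g_2}\subset E_{g_1,g_2}$. The plan is to show that $\Phi$ is representable and étale, and bijective on geometric points, then invoke Lemma \ref{lem:etale}.

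For étaleness, I would work étale locally on $X_{g_1,g_2}$. Both the source and the target are smooth of dimension $3g-3$: the source because $\fM_g^{\ss}$ is smooth over $k$, and the target because $\mathcal{L}og^{\mathrm{al}}$ is zero-dimensional smooth while $E_{g_1,g_2}\to\mathcal{A}^1$ is smooth (being the classifying map for the smooth divisor $D_{g_1,g_2}$ in the smooth stack $E_{g_1,g_2}$). At a geometric point $x$ of $X_{g_1,g_2}$ whose semistable curve carries $r$ exceptional components over the node of its stabilization, standard deformation theory of nodal curves supplies an étale local presentation $X_{g_1,g_2}\cong \A^{r+1}\times S$, with coordinates $(t_1,\dots,t_{r+1})$ the smoothing parameters of the $r+1$ nodes lying over the node, $S$ carrying the remaining unobstructed deformations, and with stabilization to $E_{g_1,g_2}\cong \A^1\times S$ given by $(t_1,\dots,t_{r+1},s)\mapsto (t_1\cdots t_{r+1},s)$. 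On the other side, the description of $\mathcal{L}og^{\mathrm{al}}$ from \cite{acfw} exhibits the rank-$(r+1)$ component over $\mathcal{A}^1$ as the tensor-product map $(L_i,s_i)_{i=1}^{r+1}\mapsto(\bigotimes L_i,\bigotimes s_i)$; hence the fiber product has the matching local description. A direct comparison, or equivalently an infinitesimal lifting computation, then shows that $\Phi$ is an isomorphism in an étale neighborhood of $x$.

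For the bijection on geometric points, I would match the two sets directly. A geometric point of $X_{g_1,g_2}$ is either a smooth curve of genus $g$ or a semistable curve whose stabilization is a union of smooth curves of genera $g_1,g_2$ meeting at a node, the latter determined up to isomorphism by its stable model and the integer $r\geq 0$ of inserted exceptional components. A geometric point of $\mathcal{L}og^{\mathrm{al}}\times_{\mathcal{A}^1}E_{g_1,g_2}$ is the same kind of stable model equipped with an aligned log structure of some rank $r+1$ at the node, which by definition is the data of the integer $r\geq 0$ together with the ordered chain $0,e_1,e_1+e_2,\dots,e_1+\cdots+e_{r+1}$. Proposition \ref{prop:alignedlog}(ii) exhibits the identification, and the identification on stabilizer groups follows from the uniqueness of the alignment data on the curve side.

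The main obstacle I expect is the local computation in the second step, specifically tracking the $\G_m^{r+1}$-gerbe structure of $\mathcal{L}og^{\mathrm{al}}$ against the automorphism group of a chain of $r$ rational components. One must verify that $\Phi$ induces an isomorphism on stabilizer groups at each geometric point, i.e., that the automorphisms of the semistable curve preserving the stable model correspond exactly to the automorphisms of the corresponding Deligne--Faltings data; granting this, representability follows from Lemma \ref{lem:representablelocus} and the remainder of the argument proceeds as outlined.
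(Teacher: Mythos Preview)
Your overall architecture matches the paper's: define the comparison map $\Phi$ to the fiber product, check it is representable, \'etale, and bijective on geometric points with matching stabilizers, then invoke Lemma~\ref{lem:etale}. The difference lies entirely in how you establish \'etaleness.

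You propose to verify \'etaleness by an explicit local computation: choose smoothing parameters $t_1,\dots,t_{r+1}$ for the nodes, identify stabilization with the product map, and match this against the local model of $\mathcal{L}og^{\mathrm{al}}\to\mathcal{A}^1$ from \cite{acfw}. This is correct in principle but, as you yourself note, the delicate part is tracking the $\G_m^{r+1}$-gerbe structure and showing $\Phi$ is stabilizer preserving; that computation is essentially the whole content of the lemma. The paper sidesteps this entirely. It observes that both horizontal arrows in the square are isomorphisms away from codimension~$2$ (the top one over the stable locus, the bottom one over the rank~$\le 1$ locus), hence $\Phi$ is an isomorphism outside codimension~$2$; since the target is smooth (because $E_{g_1,g_2}\to\mathcal{A}^1$ is smooth), Zariski--Nagata purity forces $\Phi$ to be \'etale everywhere. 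Representability and the pointwise stabilizer-preserving property are then asserted directly (the latter being the easy pointwise count $\Aut(C)\cong\Aut(C')\times\G_m^r$, using $g_1\ne g_2$ to rule out the chain-reversing involution), and Lemma~\ref{lem:etale} finishes.

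So your route is valid but front-loads the hard step you flag as the obstacle; the paper's route reduces \'etaleness to a codimension statement plus purity, leaving only the elementary stabilizer count at the end.
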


\begin{proof}
The horizontal morphisms restrict to isomorphisms on the complements
of substacks of codimension $2$, hence so does the morphism
\begin{equation}
\label{eqn.Xg1g2mor}
X_{g_1,g_2}\to \mathcal{L}og^{\mathrm{al}}\times_{\mathcal{A}^1}E_{g_1,g_2},
\end{equation}
which is representable.
The fiber product in \eqref{eqn.Xg1g2mor} is smooth, since
the morphism $E_{g_1,g_2}\to [\A^1/\G_m]$ is smooth.
So, by Zariski-Nagata purity \cite[Thm.\ X.3.1]{sga1} the morphism
\eqref{eqn.Xg1g2mor} is \'etale.
The morphism \eqref{eqn.Xg1g2mor} is bijective on geometric points
and pointwise stabilizer preserving,
hence by Lemma \ref{lem:etale} is an isomorphism.
\end{proof}

For the proof of Theorem \ref{main-theorem-special-case} we use the stacks
\[ \mathcal{A}^n:=[\A^n/\G_m^n], \]
also introduced in \cite{acfw}
(quotient stacks for the standard group actions).

\begin{proof}[Proof of Theorem \ref{main-theorem-special-case}]
Let $x\in X_{g_1,g_2}(k)$ correspond to a semistable curve $C$ of
genus $g$ whose stabilization $C'$ is the union of smooth curves of
genera $g_1$ and $g_2$.
We verify Conjecture \ref{conj1} for $X_{g_1,g_2}$ at $x$.
Let $H$ be the automorphism group of $C'$.
By Lemma \ref{lem:DM}, there exist an affine scheme $W$ with action of $H$
fixing a point $y\in W$
and an \'etale representable morphism
\[ [W/H]\to E_{g_1,g_2} \]
sending $y$ to the point of $E_{g_1,g_2}$ corresponding to $C'$.

If $C$ is stable, we are done, so we assume that some
positive number $r$ of exceptional components are collapsed by
stabilization $C'\to C$.
The coordinate hyperplanes gives rise to an aligned log structure on
$\mathcal{A}^{r+1}$, and the corresponding morphism
$\mathcal{A}^{r+1}\to \mathcal{L}og^{\mathrm{al}}$ is
\'etale representable \cite{acfw} and stabilizer preserving at the origin.
Consequently, we have a stack
\[ X':=\mathcal{A}^{r+1}\times_{\mathcal{L}og^{\mathrm{al}}}X_{g_1,g_2}
\times_{E_{g_1,g_2}}[W/H], \]
such that the projection to $X_{g_1,g_2}$ is \'etale representable,
and a point $x'\in X'$ at which the
projection to $X_{g_1,g_2}$ is stabilizer preserving, sending $x'$ to $x$.

We let
\[ X'':=\mathcal{A}^{r+1}\times_{\mathcal{L}og^{\mathrm{al}}}X_{g_1,g_2}
\times_{E_{g_1,g_2}}W. \]
By Lemma \ref{lem:localstr}, $X''\cong \mathcal{A}^{r+1}\times_{\mathcal{A}^1}W$.
Since $W\to \mathcal{A}^1$ is affine,
$X''$ is of the form $[U/\G_m^{r+1}]$ with $U$ affine,
and Proposition \ref{prop:finiteetale} implies
Conjecture \ref{conj1} for $X_{g_1,g_2}$ at $x$.
\end{proof}

\section{Stacks generalities} \label{sec:stack}
In this section, we record general facts about algebraic stacks that will be useful for the proof of Theorem \ref{main-theorem}.  The reader may want to skip this section on the first reading but later return when these results are applied in \S \ref{sec:local}-\ref{sec:proof}.

\subsection{\'Etale coverings from unramified morphisms}
\label{subsec:etalecoverings}
Rydh \cite[\S 3]{rydh-can} associates to a representable unramified morphism
$X\to Y$ of algebraic stacks a stack $E_{X/Y}$ and factorization
$X\to E_{X/Y}\to Y$.\footnote{In fact, Rydh's construction works more generally for 
non-representable morphisms.}
Musta\c t\u a and Musta\c t\u a \cite[Prop.\ 1.2, Thm.\ 1.5]{mm}
associate to a representable unramified morphism $X\to Y$,
\'etale on its image $Y_1\subset Y$, a stack $F_{X/Y}$ and factorization
$X\to F_{X/Y}\to Y$.
The morphism to $Y$ from $E_{X/Y}$, respectively from $F_{X/Y}$, is
\'etale representable, with the objects over a scheme $T$, are defined to be morphisms
$T\to Y$ (respectively subject to the requirement that the 
image is disjoint from $\bar{Y}_1 \smallsetminus Y_1$), 
together with pairs consisting of
a closed subscheme $T'\subset T$ and an open immersion
$T'\to X\times_YT$ over $T$ (respectively subject to the
requirement $T'=Y_1\times_YT$).
In these constructions, the morphisms $X \to E_{X/Y}$ and $X \to F_{X/Y}$ are closed
immersions.

\begin{prop}
\label{prop:etalestuff}
Let $f\colon X\to Y$ be an \'etale representable morphism of
algebraic stacks. For a locally closed substack $W\subset Y$ with
$f(X)\cap \overline{W}\subset W$ and fiber diagram
\[\xymatrix{
Z\ar[r] \ar[d] & X \ar[d]^f \\
W \ar[r] & Y
}\]
the induced morphism $g\colon X\to F_{Z/Y}$ is an isomorphism if and only if
the restriction of $f$
over $Y\smallsetminus \overline{W}$ is an isomorphism.
\end{prop}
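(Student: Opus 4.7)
The plan is to leverage the fact that $X\to F_{Z/Y}$ is a morphism over $Y$ with both source and target \'etale representable over $Y$, hence is itself \'etale representable. Consequently, by Lemma \ref{lem:etale}, $g$ is an isomorphism if and only if every geometric fiber consists of a single point; equivalently, for every geometric point $\bar y$ of $Y$ the induced map
\[ g_{\bar y}\colon X\times_Y\bar y\to F_{Z/Y}\times_Y\bar y \]
of \'etale $\bar y$-schemes should be a bijection on points. So my proof would reduce to this pointwise check over $Y$.

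I would then carry out a three-case analysis according to the location of $\bar y$. If $\bar y\in \overline{W}\smallsetminus W$, both fibers are empty---the one over $F_{Z/Y}$ by the constraint on the image in the defining description of \cite{mm}, and the one over $X$ by the hypothesis $f(X)\cap\overline{W}\subset W$---so there is nothing to check. If $\bar y\in W$, then $T':=W\times_Y\bar y=\bar y$, and using $Z=X\times_YW$, the identification $Z\times_Y\bar y\cong X\times_Y\bar y$ shows that an object of $F_{Z/Y}(\bar y)$ amounts to an open immersion $\bar y\hookrightarrow X\times_Y\bar y$, i.e., a geometric point of $X$ over $\bar y$; chasing definitions, $g_{\bar y}$ is the tautological bijection in this case.

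The third case, $\bar y\notin \overline{W}$, carries the content of the proposition: here $T'=W\times_Y\bar y=\emptyset$ and the empty open immersion into $Z\times_Y\bar y$ provides a unique object, so $F_{Z/Y}\times_Y\bar y$ consists of a single point. Hence $g_{\bar y}$ is an isomorphism on this locus if and only if $X\times_Y\bar y$ is also a single point, and assembling over all $\bar y\in Y\smallsetminus\overline{W}$ and invoking Lemma \ref{lem:etale} once more, this is equivalent to the restriction $f^{-1}(Y\smallsetminus\overline{W})\to Y\smallsetminus\overline{W}$ being an isomorphism.

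The main technical point I expect to have to be careful about is unpacking the definition of $F_{Z/Y}$ from \cite{mm} to compute fibers over geometric points, in particular matching $T'=W\times_Y\bar y$ with the appropriate open part of $Z\times_Y\bar y$ via the cartesian square, and checking that the hypothesis $f(X)\cap \overline{W}\subset W$ is precisely what is needed to eliminate the boundary case so that the empty/single-point fibers on each side line up.
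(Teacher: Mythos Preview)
Your proposal is correct and follows essentially the same approach as the paper's proof: both reduce to Lemma~\ref{lem:etale} applied to $g$ (which is \'etale representable since $X$ and $F_{Z/Y}$ are both \'etale representable over $Y$), then observe that geometric points of both $X$ and $F_{Z/Y}$ lie only over $W\cup(Y\smallsetminus\overline{W})$, that $g$ is automatically a bijection on fibers over $W$, and that over $Y\smallsetminus\overline{W}$ the fiber of $F_{Z/Y}$ is a single point. The paper phrases the two implications separately while you run the equivalence through the fiberwise criterion in one pass, but the content is identical.
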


\begin{proof}
The forward implication is clear from the definition
of $F_{Z/Y}$, and the
reverse implication follows by applying Lemma \ref{lem:etale} to $g$
by noting that the geometric points
both of $X$ and of $F_{Z/Y}$ lie over
the union of $W$ and $Y\smallsetminus \overline{W}$ and observing that
$g$ restricts to isomorphisms
over $W$ and over $Y\smallsetminus \overline{W}$.
\end{proof}

\subsection{Some stacks with non-separated diagonal}
\label{subsec:nonsepdiag}
The stack of log structures \cite{olssonloggeometry} has
quasi-compact but non-separated
diagonal.
Here we describe an algebraic stack $\mathring{\cA}^r$ for each $r > 0$,
also with quasi-compact but non-separated diagonal.

We recall, from \S \ref{subsec:separating}, the stack
$\mathcal{A}^r=[\A^r/\G_m^r]$
(quotient stack for the standard action of $\G_m^r$ on $\A^r$).
There is an involution $\iota\colon \A^r\to \A^r$,
$(x_1,\ldots,x_r)\mapsto (x_r,\ldots,x_1)$ and a similar compatible
involution of $\G_m^r$ inducing an involution
$\iota\colon \mathcal{A}^r\to \mathcal{A}^r$.

Let us introduce $\mathring{\mathcal{A}}^r$ by defining
$U_r$ to be the non-separated scheme which is the union of two copies of
$\A^r\times \G_m^r$ along the two copies of
$(\A^1\smallsetminus\{0\})^r\times \G_m^r$ identified via the involution
\[(x_1,\ldots,x_r,t_1,\ldots,t_r)\mapsto
(x_1,\ldots,x_r,t_r\frac{x_r}{x_1},\ldots,t_1\frac{x_1}{x_r}),\]
and setting
\[\mathring{\mathcal{A}}^r=[U_r\rightrightarrows \A^r],\]
the stack associated to the groupoid scheme with
projection map and twisted action map which
on the first copy of $\A^r\times \G_m^r$ is the standard diagonal action
and on the second copy is the composition of the involution $\iota$ of
$\A^r$ with the standard diagonal action.

The stabilizer of $\mathring{\cA}^r$ at the origin is the semi-direct product
$\Gm^{r} \rtimes \ZZ/2 \ZZ$ where $\ZZ / 2\ZZ$ acts on $\Gm^r$ via 
the involution $(t_1, \ldots, t_r) \mapsto (t_r, \ldots, t_1)$ of $\Gm^r$.

Just as the stack $\mathcal{A}^1$ has the well-known interpretation of
schemes with a line bundle and a global section, we leave it to the
reader to see that $\mathring{\mathcal{A}}^1$ has the interpretation of
schemes with a line bundle, a global section, and a degree-two finite
\'etale cover of the zero-locus of the section.
In particular, the fiber of the obvious morphism
$\mathring{\mathcal{A}}^1\to\mathcal{A}^1$ over
$B\G_m=[\{0\}/\G_m]$ is isomorphic to $B\G_m\times B\Z/2\Z$;
Proposition \ref{prop:etalestuff} yields an isomorphism
\begin{equation}
\label{eqn:A1ringF}
F_{B\G_m/\mathring{\mathcal{A}}^1}\cong \mathcal{A}^1.
\end{equation}

Multiplication of coordinates induces morphisms 
$\mathrm{mul} \colon \cA^r \to \cA^1$ and 
$\mathrm{mul}\colon \mathring{\cA}^r \to \mathring{\cA}^1$.  
There are \'etale representable morphisms 
$\cA^r \to \mathring{\cA}^r$ induced on the level of
groupoid schemes from the inclusion of the
first copy of $\A^r\times\G_m^r$ in $U_r$.

\begin{prop}
\label{prop:Acirclemult}
These morphisms fit into a fiber diagram
\[
\xymatrix{
%[\AA^r / K] \ar[r]^{\mathrm{mul}} \ar[d]		&  \AA^1 \ar[d] \\
\mathcal{A}^r\ar[r]^{\mathrm{mul}}\ar[d] &
\mathcal{A}^1\ar[d] \\
\mathring{\mathcal{A}}^r\ar[r]^{\mathrm{mul}} &
\mathring{\mathcal{A}}^1
}
\]
%where $K = \ker(\Gm^r \xrightarrow{\mathrm{mul}} \Gm)$ is the 
%kernel of the multiplication morphism and 
%$\mathrm{mul} \colon [\AA^r / K] \to \AA^1$ is the GIT quotient 
%given by multiplying the coordinates.
\end{prop}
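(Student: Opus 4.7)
The plan is to verify cartesianness by showing that the canonical morphism
\[ \phi\colon \cA^r\to Y := \mathring{\cA}^r\times_{\mathring{\cA}^1}\cA^1, \]
induced by the universal property of the fiber product, is an isomorphism. Both vertical maps of the proposed square are \'etale representable: for $\cA^1\to \mathring{\cA}^1$ this is exactly the content of \eqref{eqn:A1ringF}, identifying it with $F_{B\G_m/\mathring{\cA}^1}$ via Proposition \ref{prop:etalestuff}, and for $\cA^r\to \mathring{\cA}^r$ the analogous assertion follows from the construction (inclusion of the first sheet $\A^r\times\G_m^r$ as an open subscheme of $U_r$). It follows that $Y\to\mathring{\cA}^r$ is \'etale representable as a base change, and hence that $\phi$, viewed as a morphism between two \'etale representable stacks over the common base $\mathring{\cA}^r$, is itself \'etale representable by a standard cancellation argument (the graph $\cA^r\to Y\times_{\mathring{\cA}^r}\cA^r$ is a section of the second projection, hence an open immersion).

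Given \'etale representability, Lemma \ref{lem:etale} reduces the proof to verifying that the geometric fibers of $\phi$ are singletons. I would verify this by analyzing the sheet structure of $\mathrm{mul}\colon U_r\to U_1$. The key observation is that $\mathrm{mul}$ preserves sheets---sheet $i$ of $U_r$ maps into sheet $i$ of $U_1$ for $i\in\{1,2\}$, as is immediate from the identity $\prod_j(\iota x)_j=\prod_j x_j$---and is compatible with the gluings on the respective open subschemes where all coordinates are nonzero. Consequently, any point of $U_r$ whose image in $U_1$ lies on sheet $1$ either lies on sheet $1$ of $U_r$ directly, or lies on sheet $2$ of $U_r$ above a point with $\mathrm{mul}(x)\ne 0$, where the gluing of $U_r$ has already identified the two sheets. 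Translated into the language of geometric points: a lift in $\cA^r$ of a geometric point $(\bar p,\bar q,\alpha)\in Y(k)$ amounts to selecting a $\G_m^r$-orbit inside the $U_r$-orbit $\bar p$, and this choice is forced uniquely by the requirement that the sheet of the trivializing datum $\alpha$ in $U_1$ arises via $\mathrm{mul}$ from the corresponding sheet of $U_r$.

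The most delicate step will be the careful translation between this sheet-level picture and the 2-categorical structure of the geometric fibers of $Y$: one must account for the equivalence relation on triples $(\bar p,\bar q,\alpha)$ induced by the actions of $\Aut_{\mathring{\cA}^r}(\bar p)$ and $\Aut_{\cA^1}(\bar q)$ on $\alpha\in\Aut_{\mathring{\cA}^1}(\mathrm{mul}(\bar p))$. The verification splits naturally according to whether the $U_r$-orbit $\bar p$ decomposes into one or two $\G_m^r$-orbits (the $\iota$-symmetric and non-symmetric cases, each of which may occur both at points with $\mathrm{mul}(x)=0$ and away from them), and in each case the count of equivalence classes of triples matches the count of $\G_m^r$-orbits inside $\bar p$. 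This gives the desired bijection on geometric fibers, and Lemma \ref{lem:etale} concludes.
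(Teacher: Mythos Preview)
Your argument is correct. The paper's own proof is a one-liner: ``Each of the morphisms arises from a morphism of groupoid schemes. Standard manipulations of groupoid schemes establish the proposition.'' In other words, one checks directly that the square of groupoid presentations
\[
\xymatrix{
(\A^r\times\G_m^r\rightrightarrows\A^r)\ar[r]\ar[d] & (\A^1\times\G_m\rightrightarrows\A^1)\ar[d]\\
(U_r\rightrightarrows\A^r)\ar[r] & (U_1\rightrightarrows\A^1)
}
\]
is cartesian at both the object and the morphism level---trivially at the object level, and at the morphism level precisely by your sheet-preservation observation, which yields $U_r\times_{U_1}(\A^1\times\G_m)\cong\A^r\times\G_m^r$---and then passes to quotient stacks.

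Your route is genuinely different in packaging though not in content: rather than invoke a general principle about cartesian squares of groupoids, you reduce to Lemma~\ref{lem:etale} by first establishing that $\phi$ is \'etale representable and then counting geometric fibers. The core computation is still the sheet analysis, but you translate it into a double-coset count of isomorphism classes of triples $(\bar p,\bar q,\alpha)$. This buys you a self-contained argument that does not rely on the reader supplying the passage from groupoid-level cartesianness to stack-level cartesianness, at the price of the 2-categorical bookkeeping you flag as delicate. The paper's approach is shorter but leaves that passage implicit.

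One minor inaccuracy: you write that the $\iota$-symmetric and non-symmetric cases ``may occur both at points with $\mathrm{mul}(x)=0$ and away from them,'' but away from $\mathrm{mul}(x)=0$ every coordinate is nonzero, so only the open (trivially symmetric) orbit arises. This does not affect the argument.
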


\begin{proof}
Each of the morphisms arises from a morphism of groupoid schemes.
Standard manipulations of groupoid schemes establish the proposition.
\end{proof}

%\begin{remark}  One can choose an identification $K \cong \Gm^{r-1}$ 
%such that the action on $\AA^r$ is given by
%$$(t_1, \ldots, t_{r-1}) \cdot (x_1, x_2, \ldots, x_r) = 
%(t_1 x_1, t_1^{-1} t_2 x_2, \ldots, t_{r-1}^{-1}x_r).$$
%The product $x_1 x_2 \cdots x_r$ 
%generates the ring of $K$-invariants. 
%\end{remark}

\subsection{Semi-aligned log structures}
\label{subsec:semialigned}
We return to the setting of \S \ref{subsec:aligned}.
Besides the zero section there is another global section $s_1$ of
$\overline{M}^1$, given by the sums of generators of the
$\overline{M}_{\bar x}$.
An involution $j$ of $\overline{M}^1$ is characterized by the property
that the composite
\[
\overline{M}^1\stackrel{(\mathrm{id},j)}\longrightarrow
\overline{M}^1\times \overline{M}^1\subset
\overline{M}\times \overline{M}\stackrel{+}\longrightarrow
\overline{M}
\]
is the constant map $s_1$.
Given an aligned log structure, application of the involution $j$
yields a new aligned log structure,
the \emph{opposite aligned log structure}.

A related notion is \emph{semi-aligned log structure}, exactly as in \S \ref{subsec:aligned} but
with
\[\mathcal{S}_{\bar x} = \{0,e_1,e_n,e_1+e_2,e_{n-1}+e_n,\ldots,e_1+\cdots+e_n\}.\]

\begin{prop}
\label{prop:traceidempotentmonoids}
Let $\pi\colon X\to Y$ be a finite type \'etale universally closed morphism
of algebraic spaces.
Then there is a natural transformation
\[
tr\colon \pi_*\circ \pi^*\to \mathrm{id}
\]
(``trace'')
on sheaves of idempotent monoids on $Y$, which on stalks is given by
summing the values on fibers.
\end{prop}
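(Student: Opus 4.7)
The plan is to construct the trace \'etale-locally on $Y$ and descend, exploiting the idempotent structure of the target monoids. First, observe that $\pi$ is in fact finite \'etale: being \'etale, $\pi$ is locally quasi-finite, and a locally quasi-finite, universally closed morphism of finite type is finite. Consequently there exists an \'etale surjection $V\to Y$ over which $\pi$ trivializes, i.e., $V\times_Y X\cong \bigsqcup_{i=1}^d V$, where $d$ may vary on connected components of $V$.

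Over $V$, flat base change for the finite flat morphism $\pi$ identifies the restriction of $\pi_*\pi^*F$ with $F|_V^d$. I would define the trace over $V$ as the fold map
\[ F|_V^d\to F|_V,\qquad (a_1,\ldots,a_d)\mapsto a_1+\cdots+a_d.\]
The main task is to check that this construction descends to a map on $Y$, which amounts to verifying that the two pullbacks of the fold map to $V\times_Y V$ coincide. Over $V\times_Y V$, the two induced trivializations of $V\times_Y V\times_Y X$ differ, on each connected component, by a permutation of the $d$ factors (the monodromy of the cover). The descent therefore reduces to the assertion that $a_1+\cdots+a_d$ is invariant under permutation of the summands in an idempotent commutative monoid---which is immediate, since such a monoid is a semilattice and the sum of a finite collection is just the unordered join. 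The two pulled-back trace maps thus agree, so the local construction descends canonically to a natural transformation $tr\colon \pi_*\pi^*\to \mathrm{id}$.

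Naturality in $F$ is built into the construction. For the stalk description at a geometric point $\bar y\in Y$, I would choose $V$ to be an \'etale neighborhood of $\bar y$ over which the fiber $\pi^{-1}(\bar y)$ splits as $d=|\pi^{-1}(\bar y)|$ sections through $\bar y$; then $(\pi_*\pi^*F)_{\bar y}\cong F_{\bar y}^d$, and $tr$ sends a tuple $(t_x)_{x\in\pi^{-1}(\bar y)}$ to $\sum_x t_x$, as claimed. The only substantive content is the descent, and this is handled entirely by the symmetry of finite sums in an idempotent commutative monoid; no calculation beyond this symmetry is required.
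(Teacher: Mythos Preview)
Your argument contains a genuine gap: the assertion that $\pi$ is finite is false under the stated hypotheses. A locally quasi-finite, finite-type, universally closed morphism is finite only if it is also \emph{separated}, and separatedness is neither assumed nor a consequence of the other hypotheses. A concrete counterexample is the affine line with doubled origin $X\to\A^1$: this map is \'etale (it is locally an isomorphism), of finite type, and universally closed (for any base change $Y'\to\A^1$ the fiber product is covered by two copies of $Y'$, so the image of any closed set is a union of two closed sets), yet it is not separated and hence not finite. In particular there is no \'etale cover $V\to\A^1$ over which $X$ trivializes as a disjoint union of copies of $V$: the fiber cardinality jumps from $1$ to $2$ at the origin, which is impossible for a split cover on any connected neighborhood.

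This is also where your use of idempotency misses the point. Permutation invariance of a finite sum follows already from commutativity; idempotency plays no role there. The paper's proof proceeds differently and uses idempotency in an essential way: given $y\in Y$, one finds an \'etale neighborhood $Y'$ and sections $\sigma_1,\dots,\sigma_r$ of $X\times_YY'\to Y'$ whose images \emph{cover} (but need not partition) the fiber over $y'$, then invokes universal closedness to shrink $Y'$ so that these images cover all of $X\times_YY'$. The trace is then defined as $\sum_i\sigma_i^*$. Because the sections may coincide over some loci (as the two obvious sections of the doubled line do away from the origin), the sum involves repeated terms; it is precisely idempotency that makes the result independent of the choice of covering sections and equal, on stalks, to the sum over the distinct fiber points. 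Your construction can be salvaged by replacing the trivialization step with this covering-by-sections argument.
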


\begin{proof}
Given $y\in Y$ there exist an \'etale neighborhood
$y'\in Y'\to Y$ with $y'\mapsto y$ and sections
$\sigma_1, \dots, \sigma_r$ of $\pi'\colon X':=X\times_YY'\to Y'$
whose images cover $\pi'^{-1}(y')$.
Since $\pi$ is universally closed, we may replace $Y'$ by a suitable open
neighborhood of $y'$, so that the images of the sections cover $X'$.
We may then define the trace locally by summing the $r$ sections over $Y'$.
\end{proof}

Besides the stack of aligned log structures
$\mathcal{L}og^{\mathrm{al}}$ described in \cite{acfw},
there is a stack of semi-aligned log structures, which we denote
by $\mathcal{L}og^{\frac{1}{2}\mathrm{al}}$.
The assignment, to a scheme with an aligned log structure, of the opposite
aligned log structure, yields an involution $j$ of the stack
$\mathcal{L}og^{\mathrm{al}}$.
If $X$ is a scheme and
$(M\to \mathcal{O}_X, \mathcal{S}\subset \overline{M}^1)$
is an aligned log structure on $X$, then
$\mathcal{S} \sqcup j(\mathcal{S})$ is an
aligned log structure on $X\sqcup X$, and the trace for
$X\sqcup X\to X$ yields
$tr(\mathcal{S} \sqcup j(\mathcal{S}))$, a
semi-aligned log structure on $X$.
This construction gives rise to a morphism of algebraic stacks
\begin{equation}
\label{eqn:alignedtosemialigned}
\mathcal{L}og^{\mathrm{al}}\to \mathcal{L}og^{\frac{1}{2}\mathrm{al}}.
\end{equation}

\begin{prop}
\label{prop:logstructures}
The morphism \eqref{eqn:alignedtosemialigned} is
representable, \'etale, of finite type, and
universally closed,  and restricts to an
isomorphism over the locus of locally free log structures of
rank $\le 1$ and  a finite \'etale morphism of degree $2$ over
the locus of rank $\ge 2$.
\end{prop}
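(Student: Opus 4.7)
The plan is to unwind the morphism \eqref{eqn:alignedtosemialigned} at the level of characteristic sheaves and realize the fiber product as a transparent algebraic space, from which the four properties follow. Given a scheme $T$ with semi-aligned log structure $(M,\mathcal{S}^{1/2})$, the fiber product $Y := T\times_{\mathcal{L}og^{\frac{1}{2}\mathrm{al}}}\mathcal{L}og^{\mathrm{al}}$ classifies aligned substructures $(M,\mathcal{S})$ with $\mathcal{S}\cup j(\mathcal{S}) = \mathcal{S}^{1/2}$. At a geometric point $\bar x\in T$ where the log structure has rank $n$, the set of such aligned lifts has cardinality $1$ when $n\le 1$ (since then $\mathcal{S}_{\bar x} = \mathcal{S}^{1/2}_{\bar x}$ is forced) and cardinality $2$ when $n\ge 2$ (corresponding to the two rank-$1$ elements of $\mathcal{S}^{1/2}_{\bar x}\cap \overline{M}^1$, either of which may serve as the starting generator $e_1$ of the chain, the rest of $\mathcal{S}_{\bar x}$ being then determined by following the chain through $\mathcal{S}^{1/2}_{\bar x}$).

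I would construct $Y$ concretely as follows. Let $T^{\ge 2}\subset T$ be the closed substack where the rank is at least $2$, and let $\tilde T^{\ge 2}\to T^{\ge 2}$ be the degree-$2$ finite \'etale cover parametrizing the two rank-$1$ elements of $\mathcal{S}^{1/2}|_{T^{\ge 2}}\cap \overline{M}^1$. Then $Y$ is the non-separated algebraic space obtained by gluing two copies of $T$ along the rank-$\le 1$ complement of $T^{\ge 2}$, twisted by the double cover $\tilde T^{\ge 2}$; equivalently, $Y\to T$ is an isomorphism over the open substack $T^{\le 1}$ and coincides with $\tilde T^{\ge 2}\to T^{\ge 2}$ over the closed substack $T^{\ge 2}$. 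Representability, \'etaleness, finite type, and the rank-stratum statements all follow immediately from this explicit description.

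For universal closedness I would verify the valuative criterion directly. Given a DVR $A$ with fraction field $K$, a morphism $\Spec A\to \mathcal{L}og^{\frac{1}{2}\mathrm{al}}$, and an aligned lift of $\Spec K$, the generization map $\overline{M}^1_s\to \overline{M}^1_\eta$ sends each rank-$1$ element of $\mathcal{S}^{1/2}_s$ either to $0$ or to a rank-$1$ element of $\mathcal{S}^{1/2}_\eta$. A direct inspection of the chain structure at the stalks shows that whichever starting generator (or, if $n_\eta=0$, empty choice) has been selected at $\eta$ admits at least one lift to a starting generator at $s$, giving an aligned structure on all of $\Spec A$. The main subtlety I anticipate lies in the case $n_\eta<2\le n_s$: the unique generic lift extends in two distinct ways at $s$, reflecting the non-separated ``doubled-point'' structure of $Y$ over the rank-$\ge 2$ substack, which is precisely the phenomenon responsible for the morphism being universally closed but not finite.
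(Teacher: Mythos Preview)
Your approach is correct and genuinely different from the paper's. The paper does not build the fiber $Y$ explicitly; instead it argues property by property: representability and local finite type are declared clear, \'etaleness is checked via the infinitesimal lifting (square-zero extension) criterion, and universal closedness plus finite type are obtained in one stroke by observing that the sheaf of sections of \eqref{eqn:alignedtosemialigned} is, \'etale-locally on the base, a quotient of a finite constant sheaf. The rank-stratum statements then follow from the fiber counts together with these established properties. Your route---realizing $Y$ as (what the paper would write as) $F_{\tilde T^{\ge 2}/T}$ and reading off representability, \'etaleness, and finite type from that explicit model, then verifying universal closedness via the valuative criterion---is more hands-on and arguably more illuminating about the geometry (it makes the non-separated ``doubled'' structure over $T^{\ge 2}$ visible), while the paper's argument is shorter and avoids any explicit construction.

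One small point of presentation: your phrase ``gluing two copies of $T$ along the rank-$\le 1$ complement, twisted by the double cover'' is informal and could confuse a reader when the cover $\tilde T^{\ge 2}\to T^{\ge 2}$ is nontrivial (then $Y$ is not literally two copies of $T$). Your ``equivalently'' sentence is the precise statement, and in the paper's language this is exactly $Y\cong F_{\tilde T^{\ge 2}/T}$; it would be cleaner to lead with that. With that caveat, the argument goes through.
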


\begin{proof}
The morphism \eqref{eqn:alignedtosemialigned} is clearly representable and
locally of finite type,
and it is trivial to verify the
criterion in terms of square-zero extensions to be \'etale.
The sheaf of sections of \eqref{eqn:alignedtosemialigned}
is locally the quotient
of a finite constant sheaf, hence the morphism \eqref{eqn:alignedtosemialigned}
is universally closed of finite type.
The assertion about the locus of locally free log structures of
rank $\le 1$ is a triviality.
Since the geometric fibers over locally free log structures of rank $\ge 2$
all consist of two points, the properties of the morphism \eqref{eqn:alignedtosemialigned}
 imply that it restricts to a finite morphism.
 \end{proof}

Given a normal crossings divisor $D$ on a smooth algebraic stack $X$, 
we set $X^{k} \subset X$ to be the locally closed 
substack consisting of all points lying, smooth locally in 
exactly $k$ smooth divisors.  
For instance, 
$X^0 = X \setminus D$ 
and $X^1$ is the smooth locus of $D$.  
Set $X^{\le k}$ and $X^{\ge k}$ to be the 
locally closed substacks obtained by taking the unions
of the corresponding  $X^{i}$'s.
We make these definitions more generally for an algebraic stack with a given
locally free log structure.

Considering the fiber square
\begin{equation}
\begin{split}
\label{eqn:squareforcorlogstructures}
\xymatrix{
\mathcal{L}og^{\mathrm{al},\ge 2} \ar[r] \ar[d] &
\mathcal{L}og^{\mathrm{al}} \ar[d] \\
\mathcal{L}og^{\frac{1}{2}\mathrm{al},\ge 2} \ar[r] &
\mathcal{L}og^{\frac{1}{2}\mathrm{al}}
}
\end{split}
\end{equation}
Proposition \ref{prop:logstructures} tells us that the criterion of
Proposition \ref{prop:etalestuff} is satisfied.

\begin{cor}
\label{cor:logstructures}
The fiber square \eqref{eqn:squareforcorlogstructures} induces
an isomorphism
\[
\mathcal{L}og^{\mathrm{al}}\to
F_{\mathcal{L}og^{\mathrm{al},\ge 2}/\mathcal{L}og^{\frac{1}{2}\mathrm{al}}}
\] 
\end{cor}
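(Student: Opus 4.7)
The plan is to deduce the corollary directly from Proposition \ref{prop:etalestuff}, taking $f$ to be the morphism \eqref{eqn:alignedtosemialigned}, $X=\mathcal{L}og^{\mathrm{al}}$, $Y=\mathcal{L}og^{\frac{1}{2}\mathrm{al}}$, and $W=\mathcal{L}og^{\frac{1}{2}\mathrm{al},\ge 2}$. The construction in \S\ref{subsec:semialigned} producing a semi-aligned log structure from an aligned one via the trace along $X\sqcup X\to X$ preserves the underlying locally free log structure (only the marked subsheaf $\mathcal{S}$ changes), hence $f$ preserves rank, and the fiber product $W\times_YX$ coincides with $\mathcal{L}og^{\mathrm{al},\ge 2}$, matching the square \eqref{eqn:squareforcorlogstructures}.

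First I would note that $f$ is \'etale and representable by Proposition \ref{prop:logstructures}. Next I would observe that $W$ is closed in $Y$: the locus where a locally free log structure has rank at least $2$ is closed by upper semi-continuity of rank of the characteristic monoid, hence $\overline{W}=W$ and the condition $f(X)\cap\overline{W}\subset W$ of Proposition \ref{prop:etalestuff} is automatic.

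The remaining hypothesis of Proposition \ref{prop:etalestuff} is that $f$ restricts to an isomorphism over $Y\smallsetminus\overline{W}$, that is, over the locus $\mathcal{L}og^{\frac{1}{2}\mathrm{al},\le 1}$ of locally free log structures of rank at most $1$. This is precisely the first conclusion of Proposition \ref{prop:logstructures}. Having verified all hypotheses, Proposition \ref{prop:etalestuff} yields the desired isomorphism $\mathcal{L}og^{\mathrm{al}}\to F_{\mathcal{L}og^{\mathrm{al},\ge 2}/\mathcal{L}og^{\frac{1}{2}\mathrm{al}}}$.

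Since the prose immediately preceding the corollary already announces that the criterion of Proposition \ref{prop:etalestuff} is satisfied, there is no real obstacle beyond assembling the pieces; all substantive input (the finite, \'etale, universally closed nature of \eqref{eqn:alignedtosemialigned} and its behavior on the rank-stratification) has been isolated into Proposition \ref{prop:logstructures}, and the universal property of $F_{Z/Y}$ into Proposition \ref{prop:etalestuff}.
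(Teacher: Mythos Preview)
Your proof is correct and follows exactly the approach the paper intends: the paper simply asserts before the corollary that the criterion of Proposition~\ref{prop:etalestuff} is satisfied thanks to Proposition~\ref{prop:logstructures}, and you have unpacked precisely this, identifying $f$, $X$, $Y$, $W$, $Z$ and verifying each hypothesis (\'etale representability of $f$, closedness of $W$, and the isomorphism over the rank~$\le 1$ locus).
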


Another consequence of Proposition \ref{prop:logstructures} is that
we have a morphism
\[
\mathrm{al}\colon \mathcal{L}og^{\frac{1}{2}\mathrm{al},\ge 2}\to B\Z/2\Z
\]
corresponding to the restriction of morphism \eqref{eqn:alignedtosemialigned}
to $\mathcal{L}og^{\frac{1}{2}\mathrm{al},\ge 2}$.
Now we consider the composite morphism
\begin{equation}
\label{eqn:tomakeuglyF}
\mathcal{L}og^{\frac{1}{2}\mathrm{al},\ge 2}
\stackrel{(\mathrm{id},\mathrm{al})}\longrightarrow
\mathcal{L}og^{\frac{1}{2}\mathrm{al},\ge 2}\times B\Z/2\Z
\hookrightarrow
\mathcal{L}og^{\frac{1}{2}\mathrm{al}}\times_{\mathcal{A}^1}
\mathring{\mathcal{A}}^1,
\end{equation}
remembering that the fiber of
$\mathring{\mathcal{A}}^1$ over $B\G_m\subset \mathcal{A}^1$
is isomorphic to $B\G_m\times B\Z/2\Z$,
so that we obtain a stack
\begin{equation}
\label{eqn:uglyF}
F_{\mathcal{L}og^{\frac{1}{2}\mathrm{al},\ge 2}/
\mathcal{L}og^{\frac{1}{2}\mathrm{al}}\times_{\mathcal{A}^1}
\mathring{\mathcal{A}}^1}.
\end{equation}

We note that while $\mathcal{L}og^{\mathrm{al}, r}$ consists of a single point 
with stabilizer $\Gm^r$ and $\mathcal{L}og^{\frac{1}{2}\mathrm{al}, r}$ consists 
of a single point with
stabilizer $\Gm^r$ for $r \le 1 $ and $\Gm^r \rtimes \ZZ/2\ZZ$ for $ r \ge 2$, 
the stack \eqref{eqn:uglyF} in codimension $r$ is a point
with trivial stabilizer for $r =0$ and
stabilizer $\Gm^r \rtimes \ZZ/2\ZZ$ for $r \ge 1$.
This feature leads us to introduce a new notion, and
compact notation.

By an \emph{augmented semi-aligned log structure} on a scheme or
algebraic stack $X$ we mean the data of
a semi-aligned log structure $(M,S)$,
a degree $2$ finite \'etale cover $\widehat{X}\to X^{\ge 1}$, and an
\emph{augmentation isomorphism}
\[
\widehat{X}^{\ge 2}\to
\mathcal{L}og^{\mathrm{al}}
\times_{\mathcal{L}og^{\frac{1}{2}\mathrm{al}}}
X^{\ge 2}
\]
over $X^{\ge 2}$.
(The fiber product is of the morphism \eqref{eqn:alignedtosemialigned}
and the restriction of $(M,S)$ to $X^{\ge 2}$.)
The stack \eqref{eqn:uglyF} is identified with the
stack of augmented semi-aligned log structures, which we denote by
\[
\mathcal{L}og^{(\frac{1}{2}+\varepsilon)\mathrm{al}}.
\]
To understand the stack \eqref{eqn:uglyF}, following the definition
in \S \ref{subsec:etalecoverings},
it is important to remember
the isomorphism of target objects in
the definition of fiber product of stacks
$\mathcal{L}og^{\frac{1}{2}\mathrm{al},\ge 2}\times_{
\mathcal{L}og^{\frac{1}{2}\mathrm{al}}\times_{\mathcal{A}^1}
\mathring{\mathcal{A}}^1}T$
(target of the open immersion).

As an example, there is a natural semi-aligned log structure on
$\mathring{\mathcal{A}}^r$,
by taking the $e_1$, $\dots$, $e_n$ to correspond to the natural
coordinates on $\A^r$.
The triple consisting of this semi-aligned log structure,
the cover
$(\mathcal{A}^r)^{\ge 1}\to (\mathring{\mathcal{A}}^r)^{\ge 1}$,
and the restriction of the natural aligned log structure of
$\mathcal{A}^r$, is an augmented semi-aligned log structure.  This yields 
a morphism
\begin{equation} \label{E:augmentedA1}
\mathring{\mathcal{A}}^r\to\mathcal{L}og^{(\frac{1}{2}+\varepsilon)\mathrm{al}}.
\end{equation}

An aligned log structure determines an augmented semi-aligned
log structure (via the trace construction as in \eqref{eqn:alignedtosemialigned},
with trivial cover and isomorphism of trivial covers as
augmentation isomorphism).
In other words, we have a morphism from $\mathcal{L}og^{\mathrm{al}}$ to
$\mathcal{L}og^{(\frac{1}{2}+\varepsilon)\mathrm{al}}$, and
this is \'etale since the stack \eqref{eqn:uglyF} is \'etale over
$\mathcal{L}og^{\frac{1}{2}\mathrm{al}}$ and the composite
\[ \mathcal{L}og^{\mathrm{al}}\to
\mathcal{L}og^{(\frac{1}{2}+\varepsilon)\mathrm{al}}\to
\mathcal{L}og^{\frac{1}{2}\mathrm{al}} \]
is \'etale.  There is also a morphism
\begin{equation} \label{E:augmentedtoA1circ}
\mathcal{L}og^{(\frac{1}{2}+\varepsilon)\mathrm{al}}\to \mathring{\mathcal{A}}^1,
\end{equation}
mapping an augmented semi-aligned log structure to its degree $2$ cover.

\begin{prop}
\label{prop:Araugmentedsemialigned}
The morphisms \eqref{E:augmentedA1} and \eqref{E:augmentedtoA1circ}
fit into a fiber diagram
\[
\xymatrix{
\mathcal{A}^r \ar[r] \ar[d] & \mathcal{L}og^{\mathrm{al}} \ar[r] \ar[d] & \mathcal{A}^1 \ar[d]\\
\mathring{\mathcal{A}}^r \ar[r] & \mathcal{L}og^{(\frac{1}{2}+\varepsilon)\mathrm{al}} \ar[r] &
\mathring{\mathcal{A}}^1
}
\]
\end{prop}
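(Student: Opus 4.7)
The plan is to apply the pasting lemma for Cartesian squares: if both the right-hand square and the outer rectangle can be shown to be Cartesian, then the left-hand square is Cartesian automatically. The outer rectangle has horizontal compositions which I claim coincide with the multiplication morphisms, because the standard aligned log structure on $\mathcal{A}^r$ has characteristic generators $e_1,\dots,e_r$ corresponding to the coordinate divisors, and the total section $s_1=e_1+\cdots+e_r$ pulls back to $x_1\cdots x_r$ (and analogously for $\mathring{\mathcal{A}}^r$, using the natural augmented semi-aligned structure from \eqref{E:augmentedA1}). Hence the outer rectangle is exactly the Cartesian diagram of Proposition \ref{prop:Acirclemult}, and it suffices to establish that the right-hand square is Cartesian.

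For the right-hand square I would use the defining presentation \eqref{eqn:uglyF}, namely $\mathcal{L}og^{(\frac{1}{2}+\varepsilon)\mathrm{al}} \cong F_{\mathcal{L}og^{\frac{1}{2}\mathrm{al},\ge 2}/\mathcal{L}og^{\frac{1}{2}\mathrm{al}}\times_{\mathcal{A}^1}\mathring{\mathcal{A}}^1}$, together with the standard fact (immediate from the functor-of-points description recalled in \S\ref{subsec:etalecoverings}) that the Musta\c t\u a--Musta\c t\u a construction commutes with base change: $F_{Z/Y}\times_YY'\cong F_{Z\times_YY'/Y'}$. Since the composition $\mathcal{A}^1\to\mathring{\mathcal{A}}^1\to\mathcal{A}^1$ is the identity (adjoin the trivial cover, then forget), base changing along $\mathcal{A}^1\to\mathring{\mathcal{A}}^1$ collapses $\mathcal{L}og^{\frac{1}{2}\mathrm{al}}\times_{\mathcal{A}^1}\mathring{\mathcal{A}}^1$ back to $\mathcal{L}og^{\frac{1}{2}\mathrm{al}}$, yielding
\[
\mathcal{L}og^{(\frac{1}{2}+\varepsilon)\mathrm{al}} \times_{\mathring{\mathcal{A}}^1}\mathcal{A}^1 \;\cong\; F_{\mathcal{L}og^{\frac{1}{2}\mathrm{al},\ge 2}\times_{\mathring{\mathcal{A}}^1}\mathcal{A}^1/\mathcal{L}og^{\frac{1}{2}\mathrm{al}}}.
\]
The remaining task is to identify the base-changed source with $\mathcal{L}og^{\mathrm{al},\ge 2}$. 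Every semi-aligned log structure of rank $\ge 2$ has total section identically zero (it is a product of $\ge 2$ vanishing generators), so the map \eqref{eqn:tomakeuglyF} factors through the fiber $B\G_m\times B\ZZ/2\ZZ$ of $\mathring{\mathcal{A}}^1\to\mathcal{A}^1$ over $B\G_m$, with the $B\ZZ/2\ZZ$-factor being the alignment cover $\mathrm{al}$. Since $\mathcal{A}^1\to\mathring{\mathcal{A}}^1$ adjoins the trivial $\ZZ/2\ZZ$-torsor, a point of the fiber product is a rank-$\ge 2$ semi-aligned log structure together with a trivialization of its alignment cover; by Proposition \ref{prop:logstructures} such a trivialization is precisely a section of the finite \'etale degree-$2$ morphism $\mathcal{L}og^{\mathrm{al},\ge 2}\to\mathcal{L}og^{\frac{1}{2}\mathrm{al},\ge 2}$, i.e., an aligned refinement. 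This gives $\mathcal{L}og^{\frac{1}{2}\mathrm{al},\ge 2}\times_{\mathring{\mathcal{A}}^1}\mathcal{A}^1\cong\mathcal{L}og^{\mathrm{al},\ge 2}$, and then Corollary \ref{cor:logstructures} identifies the resulting $F$-stack with $\mathcal{L}og^{\mathrm{al}}$, proving that the right-hand square is Cartesian.

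The main obstacle is the bookkeeping in this last identification: keeping straight how the $\mathring{\mathcal{A}}^1$-data on $\mathcal{L}og^{\frac{1}{2}\mathrm{al},\ge 2}$ from \eqref{eqn:tomakeuglyF} encodes the alignment cover, and recognising that pairing it with the trivial cover coming from $\mathcal{A}^1$ gives exactly an aligned refinement of the semi-aligned log structure. Once this identification and the compatibility of the $F$-construction with base change are in hand, the rest of the proof is a formal application of Corollary \ref{cor:logstructures}, Proposition \ref{prop:Acirclemult}, and the pasting lemma.
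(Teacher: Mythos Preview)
Your proposal is correct and follows essentially the same route as the paper's own proof: reduce to the right-hand square via Proposition \ref{prop:Acirclemult} for the outer rectangle, then base change the defining $F$-construction \eqref{eqn:uglyF} along $\mathcal{A}^1\to\mathring{\mathcal{A}}^1$, identify the resulting unramified morphism with that of \eqref{eqn:squareforcorlogstructures}, and invoke Corollary \ref{cor:logstructures}. The paper compresses all of your second and third paragraphs into the single sentence ``upon base change by $\mathcal{A}^1\to\mathring{\mathcal{A}}^1$ the composition \eqref{eqn:tomakeuglyF} yields a composition of morphisms in \eqref{eqn:squareforcorlogstructures}'', but the content is the same; your version simply makes explicit the base-change compatibility of the $F$-construction and the identification $\mathcal{L}og^{\frac{1}{2}\mathrm{al},\ge 2}\times_{\mathring{\mathcal{A}}^1}\mathcal{A}^1\cong\mathcal{L}og^{\mathrm{al},\ge 2}$ that the paper leaves implicit.
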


\begin{proof}
Since the outer square is a fiber square
(Proposition \ref{prop:Acirclemult}), it suffices to verify that the
right-hand square is a fiber square.
This results by observing that upon base change by
$\mathcal{A}^1\to \mathring{\mathcal{A}}^1$ the composition \eqref{eqn:tomakeuglyF}
yields a composition of morphisms in \eqref{eqn:squareforcorlogstructures}
and applying Corollary \ref{cor:logstructures}.
\end{proof}

\begin{prop}
\label{prop:augmentedsemialigned}
Let $X$ be an algebraic stack with locally free log structure $M$,
and let $\pi\colon\widehat{X}\to X^{\ge 1}$ be a degree $2$ finite \'etale cover
with corresponding \'etale cover $f\colon F_{\widehat{X}/X}\to X$
and involution $\iota\colon F_{\widehat{X}/X}\to F_{\widehat{X}/X}$.
If $(f^*M,S')$ is an aligned structure with
$\iota^*S'$ related to $S'$ by the involution $j$,
then there is a unique augmented semi-aligned log structure
\[ ((M,S),\pi,\widehat{X}^{\ge 2}\to \mathcal{L}og^{\mathrm{al}}
\times_{\mathcal{L}og^{\frac{1}{2}\mathrm{al}}}
X^{\ge 2}) \]
on $X$ such that
\begin{itemize}
\item[$\mathrm{(i)}$]
the semi-aligned log structure on
$F_{\widehat{X}/X}$ obtained by the trace construction from $S'$ is
equal to $f^*S$, and
\item[$\mathrm{(ii)}$]
the restriction of $(f^*M,S')$ to $\widehat{X}^{\ge 2}$ 
is the aligned log structure obtained by projecting from the
augmentation isomorphism.
%the one appearing in the augmentation isomorphism.
\end{itemize}
Furthermore, the morphisms corresponding to the log structures fit into
a fiber diagram
\[
\xymatrix{
F_{\widehat{X}/X} \ar[r] \ar[d] & \mathcal{L}og^{\mathrm{al}} \ar[d] \\
X \ar[r] & \mathcal{L}og^{(\frac{1}{2}+\varepsilon)\mathrm{al}}
}
\]
\end{prop}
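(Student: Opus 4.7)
The plan is to construct the augmented semi-aligned log structure on $X$ by descending $(f^*M,S')$ along $f\colon F_{\widehat{X}/X}\to X$, and then to read off the fiber diagram from this construction. Uniqueness is automatic: condition $\mathrm{(i)}$ forces $S$ to be the trace of $S'$ along $f$, while condition $\mathrm{(ii)}$ together with the identification $F_{\widehat{X}/X}^{\ge 2}\cong \widehat{X}^{\ge 2}$ forces the augmentation isomorphism to be the one classifying $(f^*M,S')|_{\widehat{X}^{\ge 2}}$.

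For existence, I would first use that $f$ restricts to an isomorphism over $X^0$ and to $\pi$ (a finite étale degree-$2$ cover) over $X^{\ge 1}$ to apply Proposition \ref{prop:traceidempotentmonoids} and define $S$ as the trace of $S'$. The verification that $(M,S)$ is semi-aligned is a stalk computation: at $\bar x\in X^{\ge k}$ with $k\ge 2$, the two points of $f^{-1}(\bar x)$ are swapped by $\iota$, and the hypothesis $\iota^*S'=j(S')$ ensures that with respect to a single labelling $e_1,\ldots,e_k$ of the generators of $\overline{M}_{\bar x}$, the aligned chains on the two sheets are $0,e_1,e_1+e_2,\ldots,e_1+\cdots+e_k$ and $0,e_k,e_{k-1}+e_k,\ldots,e_1+\cdots+e_k$, whose union is precisely the semi-aligned pattern; the situation at rank $\le 1$ is trivial since aligned and semi-aligned coincide there. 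The augmentation isomorphism is then defined as follows: $S'|_{\widehat{X}^{\ge 2}}$ classifies a morphism $\widehat{X}^{\ge 2}\to \mathcal{L}og^{\mathrm{al}}$ whose composite to $\mathcal{L}og^{\frac{1}{2}\mathrm{al}}$, by the trace identity built into $S$, is the pullback of the classifying morphism $X^{\ge 2}\to \mathcal{L}og^{\frac{1}{2}\mathrm{al}}$ for $(M,S)|_{X^{\ge 2}}$. This yields an $X^{\ge 2}$-morphism $\widehat{X}^{\ge 2}\to \mathcal{L}og^{\mathrm{al}}\times_{\mathcal{L}og^{\frac{1}{2}\mathrm{al}}}X^{\ge 2}$, and both sides are degree-$2$ finite étale covers of $X^{\ge 2}$ (the target by Proposition \ref{prop:logstructures}), so the morphism is an isomorphism by Lemma \ref{lem:etale}.

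For the fiber diagram, the constructed augmented semi-aligned log structure classifies a morphism $X\to \mathcal{L}og^{(\frac{1}{2}+\varepsilon)\mathrm{al}}$, and I would verify stratum by stratum that the base change of $\mathcal{L}og^{\mathrm{al}}\to \mathcal{L}og^{(\frac{1}{2}+\varepsilon)\mathrm{al}}$ along this morphism recovers $F_{\widehat{X}/X}\to X$ equipped with $(f^*M,S')$: over $X^0$ both sides are trivial extensions by $X^0$; over $X^1$ aligned and semi-aligned structures coincide, so both sides realize $\widehat{X}|_{X^1}\to X^1$; and over $X^{\ge 2}$ this is precisely the content of the augmentation isomorphism combined with Corollary \ref{cor:logstructures}.

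The main obstacle is the passage from the set-theoretic trace of the characteristic subsheaf to the scheme-theoretic degree-$2$ cover $\mathcal{L}og^{\mathrm{al},\ge 2}\to \mathcal{L}og^{\frac{1}{2}\mathrm{al},\ge 2}$ provided by Proposition \ref{prop:logstructures} and Corollary \ref{cor:logstructures}; I expect to resolve this by reducing to the universal case on $\mathring{\mathcal{A}}^r$ via Proposition \ref{prop:Araugmentedsemialigned}, where the two sheets of the cover $U_r\rightrightarrows \A^r$ and their $j$-conjugate aligned log structures are explicit from the groupoid presentation, allowing direct verification that the stalk-level identification globalizes to the required isomorphism of étale covers.
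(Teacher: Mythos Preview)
Your treatment of existence and uniqueness of the augmented semi-aligned log structure is essentially the paper's argument, only spelled out more fully: condition~(i) pins down $S$ via the trace (the paper omits your stalk computation showing $(M,S)$ is semi-aligned, but that is exactly what is meant), and the restriction of $S'$ to $\widehat{X}^{\ge 2}$ supplies the augmentation isomorphism, uniquely determined by~(ii).

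For the fiber diagram, the paper takes a different and more direct route than your stratum-by-stratum plan. After noting 2-commutativity, the paper adjoins on the right the square with $\mathcal{L}og^{\mathrm{al}}\to\mathcal{A}^1$ over $\mathcal{L}og^{(\frac{1}{2}+\varepsilon)\mathrm{al}}\to\mathring{\mathcal{A}}^1$, which is cartesian by Proposition~\ref{prop:Araugmentedsemialigned}; it then suffices to show the outer rectangle is cartesian. The bottom composite $X\to\mathring{\mathcal{A}}^1$ is the morphism classifying the degree-$2$ cover~$\pi$, and by the identification $\mathcal{A}^1\cong F_{B\G_m/\mathring{\mathcal{A}}^1}$ of~\eqref{eqn:A1ringF} together with base-change compatibility of the $F$-construction of \S\ref{subsec:etalecoverings}, the pullback of $\mathcal{A}^1$ along $X\to\mathring{\mathcal{A}}^1$ is exactly $F_{\widehat{X}/X}$. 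Your stratified approach, by contrast, would still need a global comparison morphism (from 2-commutativity, which you do not address) before Lemma~\ref{lem:etale} can be applied, and the ``main obstacle'' you flag is not resolved by checking on $X^0$, $X^1$, $X^{\ge 2}$ separately. In fact, your proposed fix via Proposition~\ref{prop:Araugmentedsemialigned} is precisely the paper's key reduction, so once you invoke it the stratum-by-stratum verification becomes superfluous.
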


\begin{proof}
Condition (i) determines $S$ uniquely, and $(M,S)$ is a
semi-aligned log structure on $X$.
Restricting $S'$, we obtain
$\widehat{X}\to \mathcal{L}og^{\mathrm{al}}$, and the further restriction
to $\widehat{X}^{\ge 2}$ yields the
augmentation isomorphism, which is uniquely determined by (ii).

The diagram is 2-commutative.
As in the proof of Proposition \ref{prop:Araugmentedsemialigned},
we may reduce the proposition,
by adjoining the fiber square with
$\mathcal{A}^1 \to \mathring{\mathcal{A}}^1$ to the right,
to the assertion that the outer square of the larger diagram is 2-cartesian.
Since the degree $2$ cover of the augmented semi-aligned log structure is $\pi$,
we have the assertion by the isomorphism \eqref{eqn:A1ringF} and the fact that the
construction of \S \ref{subsec:etalecoverings} respects base change.
\end{proof}

\section{A local construction around a strictly semistable curve} \label{sec:local}
In this section, we construct an \'etale neighborhood
$[(X_{g, \triangle}^{\times m})^{\circ} / S_{\br} ] \to \mathfrak{M}^{\ss}_g$
of any strictly semistable curve $C$ which is stabilizer preserving at a
pre-image $x'$ of $C$
(Proposition \ref{prop-map-to-mgss}).
In order to apply
Proposition \ref{prop:finiteetale} to $\fM_g^{\ss}$ (in Section \ref{sec:proof})
we will need a refinement (Proposition \ref{prop:second})
that leads to the existence of $U\to \mathfrak{M}^{\ss}_g$,
\'etale  and representable, and
stabilizer preserving at a pre-image $x''$ of $C$
(Corollary \ref{cor:second}).

The morphism
$D_g \to \bar{\cM}_g$
is representable and unramified,
and we may consider the algebraic stack
$E_g=E_{D_g/\bar{\cM}_g}$ (construction of \S \ref{subsec:etalecoverings}),
with \'etale representable morphism
\[
E_g \to \bar{\cM}_g.
\]
The stack $E_g$ may be viewed as the moduli stack
of stable curves of genus $g$ endowed with a 
choice of at most one node.

\subsection{Stabilization}
\label{subsec:stabilization}
We define $X_g$ to be the algebraic stack parametrizing 
semistable curves of genus $g$ with at most one chosen node of the stabilization, that is, the fiber product
\[ X_g = \fM_g^{\ss} \times_{\bar{\cM}_g} E_g. \]
We will, by abuse of notation, let $\st$ denote also the projection
$X_g\to E_g$.

Let $X_{g,\triangle} \subset X_g$ be the open substack parametrizing 
semistable curves $C$ whose stabilization $C \to C'$ has at most one
positive dimensional fiber, such 
that if $c' \in C'$ has a positive dimensional fiber then the node $c'$ is
chosen.

\subsection{The local construction} \label{subsec:eg}
Let $C$ be a strictly semistable curve of genus $g$ over an algebraically closed field $k$ 
and $C'$ its stabilization.  We attach the following discrete data 
to $C$.
We let $m$ be the number of positive dimensional fibers of the stabilization $C' \to C$,
$n$ be the maximum
number of irreducible components of such a fiber, and  $\br=(r_1,\ldots,r_n)$ be the sequence
of nonnegative integers where $r_i$ is the number of such fibers having precisely $i$ irreducible components.
Note that 
$r_n > 0$ and $\sum_i r_i = m$; moreover, the number of exceptional components
is $\sum_i i r_i$.

We define 
\[
E_g^{\times m} = \underbrace{E_g\times_{\overline{\mathcal{M}}_g}
E_g\times_{\overline{\mathcal{M}}_g}
\cdots
\times_{\overline{\mathcal{M}}_g}E_g}_{m \text{ times}} \\
\]
Similarly, we denote by $X_{g, \triangle}^{\times m}$ 
the $m$-fold fiber product of $X_{g, \triangle}$
over $\bar{\mathcal{M}}_g$.
Define $(E_g^{\times m})^{\circ} \subset E_g^{\times m}$ as the open substack 
parametrizing curves where no pair of selected nodes is the same, and let 
  $(X_{g,\triangle}^{\times m})^{\circ}$ 
be the pre-image of $(E_g^{\times m})^{\circ}$.

The product $ S_{\br}=S_{r_1}\times\cdots\times S_{r_n}$ of symmetric groups
acts naturally on the $m$-fold fiber products.  
Evidently, $(E_g^{\times m})^{\circ}$ and $(X_{g,\triangle}^{\times m})^{\circ}$ 
are $S_{\br}$-invariant.  There is a point 
$x' \in (X_{g,\triangle}^{\times m})^{\circ}$ corresponding
to the $m$-tuple of semistable curves $(C_1, \ldots, C_m)$, 
where $C_i$ is obtained by 
contracting all exceptional components which do not lie  
over the $i$th marked point of the stabilization.

\begin{prop} \label{prop-map-to-mgss}
The morphism $(X_{g,\triangle}^{\times m})^{\circ} \to \mathfrak{M}^{\ss}_g$, 
defined by mapping
an $m$-tuple $(B_1, \ldots, B_m)$ of semistable curves with 
stabilization $B'$ to the fiber product
\[ B_1 \times_{B'} \cdots \times_{B'} B_m, \]
is \'etale, representable and $S_{\br}$-equivariant.  
Moreover, the induced morphism 
$$[(X_{g, \triangle}^{\times m})^{\circ} / S_{\br} ] \to \mathfrak{M}^{\ss}_g$$
is stabilizer preserving at $x'$ and the diagram
$$\xymatrix{
[(X_{g, \triangle}^{\times m})^{\circ} / S_{\br} ] \ar[r] \ar[d]	
	& [(E_g^{\times m})^{\circ} / S_{\br} ] \ar[d] \\
\mathfrak{M}^{\ss}_g \ar[r]						
	& \bar{\cM}_g
}$$
is 2-commutative with \'etale vertical morphisms and representable induced morphism
$[(X_{g, \triangle}^{\times m})^{\circ} / S_{\br} ]\to
\mathfrak{M}^{\ss}_g\times_{\bar{\cM}_g}[(E_g^{\times m})^{\circ} / S_{\br} ]$.
\end{prop}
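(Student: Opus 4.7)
The proof hinges on showing that the morphism
$$\Phi\colon (X_{g,\triangle}^{\times m})^\circ \longrightarrow \fM_g^{\ss}\times_{\bar{\cM}_g}(E_g^{\times m})^\circ,$$
induced by the evident 2-commutative square, is an isomorphism. To construct its inverse $\Psi$, start with a family of semistable curves $B\to T$ together with $m$ pairwise distinct node sections $p_1,\dots,p_m$ of the stabilization $B'\to T$, and let $B_i\to T$ be the semistable family obtained from $B$ by relatively contracting every exceptional chain except the one lying over $p_i$. Then $B_i$ lies in $X_{g,\triangle}$ with distinguished node $p_i$, and the $m$-tuple $(B_1,\dots,B_m)$ defines a $T$-point of $(X_{g,\triangle}^{\times m})^\circ$. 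A direct check shows that $\Phi\circ\Psi$ and $\Psi\circ\Phi$ are 2-isomorphic to the respective identities: the fiber product $B_1\times_{B'}\cdots\times_{B'}B_m$ recovers $B$ because each exceptional chain of $B$ occurs in exactly one $B_i$ while every other $B_j$ restricts to an isomorphism onto $B'$ near that chain; conversely, contracting all but the $i$-th chain of $B_1\times_{B'}\cdots\times_{B'}B_m$ returns $B_i$.

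Granting this, the other assertions follow quickly. The morphism $E_g\to\bar{\cM}_g$ is \'etale and representable by the construction recalled in \S\ref{subsec:etalecoverings}, so its $m$-fold fiber product is as well; passing to the open substack $(E_g^{\times m})^\circ$ and base changing along $\fM_g^{\ss}\to\bar{\cM}_g$, the projection $\fM_g^{\ss}\times_{\bar{\cM}_g}(E_g^{\times m})^\circ\to \fM_g^{\ss}$ is \'etale and representable. Composition with $\Phi$ yields that $(X_{g,\triangle}^{\times m})^\circ\to \fM_g^{\ss}$ is \'etale and representable, with the $S_\br$-equivariance (trivial on $\fM_g^{\ss}$) manifest from the symmetry of the fiber-product construction. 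Descending $\Phi$ along $S_\br$, the induced morphism
$$[(X_{g,\triangle}^{\times m})^\circ / S_\br]\longrightarrow \fM_g^{\ss}\times_{\bar{\cM}_g}[(E_g^{\times m})^\circ / S_\br]$$
is then itself an isomorphism, hence representable, and the vertical maps in the diagram are \'etale by descent along the $S_\br$-torsor atlases.

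For stabilizer preservation at $x'$, let $C$ be the semistable curve represented by $x'$, with stabilization $C'$ and selected nodes $q_1,\dots,q_m$. Any $\alpha\in\Aut(C)$ descends to an automorphism $\bar{\alpha}\in\Aut(C')$ permuting the $q_i$; because $\alpha$ preserves the lengths of the exceptional chains over each node, the resulting permutation lies in the subgroup $S_\br\subset S_m$. Restricting $\alpha$ to the contractions $C_i$ defining $x'$ produces an isomorphism $x'\to\sigma\cdot x'$ in $(X_{g,\triangle}^{\times m})^\circ$, whose $S_\br$-class in the quotient stack gives an element of $\Aut_{[(X_{g,\triangle}^{\times m})^\circ / S_\br]}([x'])$. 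This map is inverse to the natural one, since an $S_\br$-class of isomorphisms $x'\to\sigma\cdot x'$ records compatible automorphisms of the $C_i$ which glue along the common stabilization $C'$ to yield an automorphism of $C$. The main technical obstacle is the existence and functoriality in families of the relative contraction used to build $\Psi$; once that is granted, the remainder is routine unravelling of definitions.
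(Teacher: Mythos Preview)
Your central claim---that $\Phi$ is an isomorphism---is false. Already for $m=1$ the map $\Phi$ is the open inclusion $X_{g,\triangle}\hookrightarrow X_g=\fM_g^{\ss}\times_{\bar{\cM}_g}E_g$, which is not surjective. For general $m$, a point $(B,(p_1,\dots,p_m))$ of the target may have an exceptional chain in $B$ lying over a node $q$ of $B'$ distinct from all the $p_i$. Your $\Psi$ contracts that chain in every $B_i$, so $B_1\times_{B'}\cdots\times_{B'}B_m$ has no chain over $q$ and cannot recover $B$. Your verification that $\Phi\circ\Psi\cong\mathrm{id}$ silently assumes that every exceptional chain of $B$ lies over one of the selected nodes, which is not part of the data on the target side. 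Note too that the proposition only asserts the induced morphism to the fiber product is \emph{representable}, not an isomorphism.

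What is actually true is that $\Phi$ is an open immersion onto the locus where every exceptional chain of $B$ lies over $\{p_1,\dots,p_m\}$; your $\Psi$ gives an inverse there. With this correction the rest of your deductions go through, but you still owe the relative-contraction-in-families construction you flag as the main technical obstacle, and you must check it is compatible with base change so that $\Psi$ really defines a morphism of stacks.

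The paper sidesteps all of this with a shorter, different argument: it notes that $(E_g^{\times m})^\circ\subset(X_{g,\triangle}^{\times m})^\circ$ has complement of codimension~$2$ and that $X_{g,\triangle}\to\bar{\cM}_g$ is Cohen--Macaulay, so the $m$-fold fiber product is normal; the morphism to $\fM_g^{\ss}$ is \'etale in codimension~$1$ (it is an isomorphism over the stable locus), and Zariski--Nagata purity then gives \'etaleness everywhere. No contraction construction is needed.
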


\begin{proof}
 Since $(E_g^{\times m})^{\circ} \subset  (X_{g, \triangle}^{\times m})^{\circ}$ has 
 complement of codimension 2 and $X_{g, \triangle} \to \bar{\mathcal{M}}_g$ is
 Cohen-Macaulay,  $(X_{g, \triangle}^{\times m})^{\circ}$ is normal.
 Since the morphism $(X_{g, \triangle}^{\times m})^{\circ} \to \mathfrak{M}^{\ss}_g$
 is \'etale in codimension 1, we may conclude from Zariski-Nagata purity \cite[Thm.\ X.3.1]{sga1} 
that it
is \'etale.  The remaining statements are clear.
\end{proof}

Since the morphism $[(X_{g, \triangle}^{\times m})^{\circ} / S_{\br} ] \to \mathfrak{M}^{\ss}_g$
is not in general representable, some refinement is necessary in order to
reduce the verification of Conjecture \ref{conj1} to a stack related to
$[(X_{g, \triangle}^{\times m})^{\circ} / S_{\br} ]$.
We now provide such a refinement.

We recall the degree $2$ cover $\widehat{D}_g\to D_g$ from \S \ref{subsec:boundary-Mg}.
By \eqref{eqn:A1ringF}, there is a fiber diagram
\begin{equation}
\begin{split}
\label{eqn:FEA1}
\xymatrix{
F_{\widehat{D}_g/E_g} \ar[r] \ar[d] & \mathcal{A}^1 \ar[d] \\
E_g \ar[r] & \mathring{\mathcal{A}}^1
}
\end{split}
\end{equation}
where the bottom arrow is given by $\widehat{D}_g\to D_g$.
We let $F_{\widehat{D}_g/E_g}^{\times m}$ be the $m$-fold fiber product
over $\bar{\mathcal{M}}_g$ and
$(F_{\widehat{D}_g/E_g}^{\times m})^{\circ}$
the pre-image of $(E_g^{\times m})^{\circ}$.

\begin{prop} \label{prop:second}
There is an \'etale morphism $[\Spec(A)/H] \to [(E_g^{\times m})^{\circ} / S_{\br} ]$,
stabilizer preserving at a point $y''$ over the image in $[(E_g^{\times m})^{\circ} / S_{\br} ]$ of $x' \in (X_{g, \triangle}^{\times m})^{\circ}$ such that the composition 
$$[\Spec(A)/H] \to [(E_g^{\times m})^{\circ} / S_{\br} ] \to \bar{\cM}_g$$
is representable.
Moreover, there is a finite \'etale cover $\Spec(B) \to [\Spec(A) / H]$ such that
the composition $\Spec(B) \to [\Spec(A) / H] \to [(E_g^{\times m})^{\circ} / S_{\br}]$
factors through $(F_{\widehat{D}_g/E_g}^{\times m})^{\circ}$.
\end{prop}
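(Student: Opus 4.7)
The plan is to build $[\Spec(A)/H]$ as an \'etale slice via Lemma \ref{lem:DM} combined with a fiber product, and then obtain $\Spec(B)$ by trivializing $m$ tangent-direction double covers via a single finite \'etale cover.

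First, I apply Lemma \ref{lem:DM}(ii) to $\bar{\cM}_g$ at the stable curve $[C']$ underlying $x'$: this yields an affine scheme $W$ with an action of $\Aut(C')$ fixing a point $y$, and an \'etale representable, stabilizer-preserving morphism $[W/\Aut(C')] \to \bar{\cM}_g$. Setting $H := \Aut(C') \times S_{\br}$, I form the $2$-fiber product
\[
[V/H] := [W/\Aut(C')] \times_{\bar{\cM}_g} [(E_g^{\times m})^\circ/S_{\br}], \qquad V := W \times_{\bar{\cM}_g} (E_g^{\times m})^\circ,
\]
with $V$ a scheme since $E_g^{\times m} \to \bar{\cM}_g$ is representable and $W$ is affine. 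I then choose an $H$-invariant affine open $\Spec(A) \subset V$ containing a lift $y''$ of $x'$. The stabilizer of $y''$ consists of pairs $(\phi, \sigma) \in H$ with $\phi(p_i) = p_{\sigma(i)}$, where $p_1, \ldots, p_m$ are the $m$ distinct chosen nodes of $C'$; distinctness forces $\sigma$ to be determined by $\phi$, so this stabilizer injects into $\Aut(C')$, giving representability of $[\Spec(A)/H] \to \bar{\cM}_g$ at $y''$. By Lemma \ref{lem:representablelocus} and further $H$-equivariant shrinking, the composition becomes everywhere representable.

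Second, to build $\Spec(B)$, I form the $S_{\br}$-torsor $\Spec(A_1) := \Spec(A) \times_{[(E_g^{\times m})^\circ/S_{\br}]} (E_g^{\times m})^\circ$ with its canonical morphism to $(E_g^{\times m})^\circ$; for each $i$, I set $Z_i := D_g \times_{E_g} \Spec(A_1)$ via the $i$-th projection and $\widehat{Z}_i := \widehat{D}_g \times_{D_g} Z_i$, the pulled-back degree $2$ \'etale cover. By the fiber square \eqref{eqn:FEA1}, lifting $\Spec(A_1) \to (E_g^{\times m})^\circ$ to $(F_{\widehat{D}_g/E_g}^{\times m})^\circ$ amounts, after base change to a finite \'etale cover of $\Spec(A_1)$, to trivializing each $\widehat{Z}_i$. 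Formally at $y''$, each $\widehat{Z}_i$ is trivial: on the versal deformation of $C'$, the two branches of the $i$-th node are distinguished by a local model $xy = t_i$. By Artin approximation (shrinking $W$ if necessary), each $\widehat{Z}_i$ extends to a degree $2$ \'etale cover $\widehat{U}_i \to \Spec(A_1)$. Setting
\[
\Spec(B) := \widehat{U}_1 \times_{\Spec(A_1)} \cdots \times_{\Spec(A_1)} \widehat{U}_m,
\]
a finite \'etale cover of degree $2^m$, each $\widehat{U}_i$ acquires a canonical diagonal section over $\Spec(B)$, yielding the required open immersion $Z_i \times_{\Spec(A_1)} \Spec(B) \hookrightarrow \widehat{Z}_i \times_{\Spec(A_1)} \Spec(B)$. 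The composite $\Spec(B) \to \Spec(A_1) \to \Spec(A) \to [\Spec(A)/H]$ is finite \'etale, and $\Spec(B) \to (F_{\widehat{D}_g/E_g}^{\times m})^\circ$ supplies the factoring demanded by the proposition.

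The principal obstacle is the global construction of the extensions $\widehat{U}_i$: the argument rests on the formal-local triviality of each $\widehat{Z}_i$ at $y''$ (coming from the distinguished branches of each node in the versal deformation) together with an Artin-type approximation step, which must be carried out uniformly across the $m$ chosen nodes while preserving $H$-equivariance of the shrinking.
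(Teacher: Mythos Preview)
Your route is genuinely different from the paper's.  The paper never slices $\bar{\cM}_g$ directly; instead it introduces the product of hyperoctahedral groups $W_{\br}=W_{r_1}\times\cdots\times W_{r_n}$ (signed permutations, so $W_{\br}\twoheadrightarrow S_{\br}$ with kernel $(\Z/2\Z)^m$), lets $W_{\br}$ act on $(F_{\widehat{D}_g/E_g}^{\times m})^{\circ}$ compatibly with the $S_{\br}$-action downstairs, and checks that
\[
[(F_{\widehat{D}_g/E_g}^{\times m})^{\circ}/W_{\br}]\longrightarrow [(E_g^{\times m})^{\circ}/S_{\br}]
\]
is \'etale and stabilizer preserving at the unique pre-image $\tilde y$ of the image of $x'$.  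Taking $H$ to be the stabilizer at $\tilde y$ and applying Lemma~\ref{lem:DM} together with Lemma~\ref{lem:representablelocus}, the paper gets $[\Spec(A)/H]\to [(F_{\widehat{D}_g/E_g}^{\times m})^{\circ}/W_{\br}]$ with the composite to $\bar{\cM}_g$ representable.  Then $\Spec(B)$ is simply the $W_{\br}$-torsor
\[
\Spec(B)=\Spec(A)\times_{[(F_{\widehat{D}_g/E_g}^{\times m})^{\circ}/W_{\br}]}(F_{\widehat{D}_g/E_g}^{\times m})^{\circ},
\]
so the factorization through $(F_{\widehat{D}_g/E_g}^{\times m})^{\circ}$ is tautological and no extension problem arises.

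Your construction of $[\Spec(A)/H]$ via $[W/\Aut(C')]\times_{\bar{\cM}_g}[(E_g^{\times m})^{\circ}/S_{\br}]$ with $H=\Aut(C')\times S_{\br}$ is fine for the first assertion.  The gap is in the second.  To produce $\Spec(B)$ you need each $\widehat{Z}_i\to Z_i$ to extend to a double cover $\widehat{U}_i\to \Spec(A_1)$, and you justify this by formal triviality of $\widehat{Z}_i$ at $y''$ plus ``Artin approximation''.  What that argument actually yields is triviality of $\widehat{Z}_i$ over $Z_i\cap \Spec(A_1')$ for some \emph{\'etale} neighborhood $\Spec(A_1')\to \Spec(A_1)$ of $y''$, after which the extension is trivial.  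But $\Spec(A_1')\to [\Spec(A)/H]$ is then \'etale and not finite, so your $\Spec(B)\to [\Spec(A)/H]$ is not finite either.  To repair this you must show that the needed shrinking can be realized by an $\Aut(C')$-equivariant refinement of $W$ itself, and then redo the first step with the refined $W$; you flag exactly this at the end but do not carry it out.  The paper's trick of absorbing the $(\Z/2\Z)^m$ of tangent-direction choices into the acting group $W_{\br}$ from the start is precisely what eliminates this extension problem.
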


\begin{proof}
Let $W_{\br}=W_{r_1}\times\cdots\times W_{r_n}$ be the
product of the hyperoctohedral groups of signed permutations.  There is an action
of $W_{\br}$ on $(F_{\widehat{D}/E_g}^{\times m})^{\circ}$  compatible with the $S_{\br}$-action
on $(E_g^{\times m})^{\circ}$; this gives an \'etale morphism 
$[(F_{\widehat{D}/E_g}^{\times m})^{\circ} / W_{\br}] \to [(E_g^{\times m})^{\circ} / S_{\br}]$
which is stabilizer preserving at the unique pre-image $\tilde{y}$ in 
$[(F_{\widehat{D}/E_g}^{\times m})^{\circ} / W_{\br}]$
of the image of $x' \in (X_{g, \triangle}^{\times m})^{\circ}$ 
in $[(E_g^{\times m})^{\circ} / S_{\br}]$.
Let $H$ denote the stabilizer group at $\tilde{y}$.  By Lemma \ref{lem:representablelocus} and 
\S \ref{subsec:localDM}, there exists
an affine scheme $\Spec(A)$ with an action of $H$ and an \'etale morphism
$[\Spec(A)/H] \to  [(F_{\widehat{D}/E_g}^{\times m})^{\circ} / W_{\br}]$ stabilizer preserving at a point $y'' \in [\Spec(A) / H]$ above $\tilde{y}$
such that the composite morphism
$[\Spec(A) / H] \to \bar{\cM}_g$ 
is \'etale and representable.  

The final statements are obtained by taking $\Spec(B)$ to be the affine scheme representing the base change 
$\Spec(A) 
	\times_{[(F_{\widehat{D}/E_g}^{\times m})^{\circ} / W_{\br}]} 
	(F_{\widehat{D}/E_g}^{\times m})^{\circ}$.  
\end{proof}

\begin{cor}
\label{cor:second}
With the notation of Proposition \ref{prop:second},
if we define $U$ as the fiber product
\[
\xymatrix{
U \ar[r] \ar[d]		&  [\Spec(A)/H] \ar[d] \\
[(X_{g, \triangle}^{\times m})^{\circ} / S_{\br} ] 	\ar[r]		&
	[(E_g^{\times m})^{\circ} / S_{\br} ]
}
\]
then the composition $U \to [(X_{g, \triangle}^{\times m})^{\circ} / S_{\br} ]  \to \fM^{\ss}_g$ is \'etale and representable, and stabilizer preserving at a pre-image $x''$ of $C$.
\end{cor}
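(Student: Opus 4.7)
The plan is to establish the three properties of $U\to \fM_g^{\ss}$ — representability, étaleness, and stabilizer preservation at $x''$ — by exploiting the factorization $U\to [(X_{g,\triangle}^{\times m})^{\circ}/S_{\br}]\to \fM_g^{\ss}$ and unpacking the universal property of the defining fiber product. The point $x''$ will be the tautological point of $U$ corresponding to the pair $(x',y'')$, which is well-defined because Proposition \ref{prop:second} ensures that $y''$ and $x'$ share a common image $\bar y$ in $[(E_g^{\times m})^{\circ}/S_{\br}]$.

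Étaleness is immediate: the morphism $U\to [(X_{g,\triangle}^{\times m})^{\circ}/S_{\br}]$ is the base change of the étale morphism $[\Spec(A)/H]\to [(E_g^{\times m})^{\circ}/S_{\br}]$ supplied by Proposition \ref{prop:second}, and $[(X_{g,\triangle}^{\times m})^{\circ}/S_{\br}]\to \fM_g^{\ss}$ is étale by Proposition \ref{prop-map-to-mgss}. For representability I would invoke the representable induced morphism
\[
[(X_{g,\triangle}^{\times m})^{\circ}/S_{\br}]\to \fM_g^{\ss}\times_{\bar{\cM}_g}[(E_g^{\times m})^{\circ}/S_{\br}]
\]
from Proposition \ref{prop-map-to-mgss}. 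Base change along $[\Spec(A)/H]\to [(E_g^{\times m})^{\circ}/S_{\br}]$ shows that $U\to \fM_g^{\ss}\times_{\bar{\cM}_g}[\Spec(A)/H]$ is representable, and then composing with the projection to $\fM_g^{\ss}$, which is representable since $[\Spec(A)/H]\to \bar{\cM}_g$ is representable by the hypothesis of Proposition \ref{prop:second}, yields representability of $U\to \fM_g^{\ss}$.

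For stabilizer preservation, the general formula for stabilizer groups of a 2-fiber product of stacks gives
\[
\Aut_U(x'')\cong \Aut_{[(X_{g,\triangle}^{\times m})^{\circ}/S_{\br}]}(x')\times_{\Aut_{[(E_g^{\times m})^{\circ}/S_{\br}]}(\bar y)}\Aut_{[\Spec(A)/H]}(y'').
\]
Since Proposition \ref{prop:second} states that $[\Spec(A)/H]\to [(E_g^{\times m})^{\circ}/S_{\br}]$ is stabilizer preserving at $y''$, the right-hand factor maps isomorphically to the base of the fiber product, so the projection $\Aut_U(x'')\to \Aut_{[(X_{g,\triangle}^{\times m})^{\circ}/S_{\br}]}(x')$ is an isomorphism. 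Composing with the stabilizer-preservation isomorphism at $x'$ provided by Proposition \ref{prop-map-to-mgss} yields the desired isomorphism $\Aut_U(x'')\stackrel{\sim}\to \Aut_{\fM_g^{\ss}}(C)$. I do not anticipate any real obstacle: the corollary is a purely formal consequence of combining the two preceding propositions.
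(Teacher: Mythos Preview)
Your argument is correct and is precisely the formal unpacking that the paper leaves implicit: the corollary is stated without proof, as an immediate consequence of Propositions \ref{prop-map-to-mgss} and \ref{prop:second}, and your decomposition into \'etaleness (composite of base-changed \'etale with \'etale), representability (via the representable morphism to $\fM_g^{\ss}\times_{\bar{\cM}_g}[(E_g^{\times m})^{\circ}/S_{\br}]$ from Proposition \ref{prop-map-to-mgss} followed by the representable projection coming from $[\Spec(A)/H]\to\bar{\cM}_g$), and stabilizer preservation (fiber-product formula plus the two stabilizer-preserving inputs) is exactly the intended verification.
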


\section{Local structure of $X_g$} \label{sec:locXg}

In the section, we exhibit an augmented semi-aligned log structure on $X_g$.
This semi-aligned log structure allows us to establish Proposition \ref{prop:fiberdiagramxtriangle} which provides an explicit description of the fiber of the stabilization morphism over a stable curve with precisely one node.
This proposition will in turn be applied in \S \ref{sec:proof} to complete the proof of Theorem \ref{main-theorem}.

\subsection{Augmented semi-aligned log structure on $X_g$}
\label{subsec:semialignedXg}

Recall that $X_g$ was defined in \S \ref{subsec:stabilization}.
We denote by $\widehat{X}_g$ the fiber product
\[
\xymatrix{
\widehat{X}_g \ar[r] \ar[d]		&  \widehat{D}_g \ar[d] \\
X_g \ar[r]^{\st}				& E_g
}
\]
where the degree 2 \'etale cover 
$\widehat{D}_g \to D_g \subset E_g$ 
is as in \S \ref{subsec:boundary-Mg}.
By applying the construction of \S \ref{subsec:etalecoverings},
diagram \eqref{eqn:FEA1} may be extended to the left with a fiber square
\begin{equation}
\begin{split}
\label{eqn:extend}
\xymatrix{
F_{\widehat{X}_g/X_g} \ar[r] \ar[d] &
F_{\widehat{D}_g/E_g} \ar[d] \\
X_g \ar[r] & E_g
}
\end{split}
\end{equation}
We observe,
$F_{\widehat{D}_g/E_g}\to \overline{\mathcal{M}}_g$ and $\widehat{D}$
satisfy the 
hypotheses of Proposition \ref{prop:alignedlog} and thus determine
an aligned log structure on
$F_{\widehat{X}_g/X_g}$.
Now Proposition \ref{prop:augmentedsemialigned} may be applied to yield
an augmented semi-aligned log structure
\begin{equation}
\label{eqn:semialignedonX}
X_g\to \mathcal{L}og^{(\frac{1}{2}+\varepsilon)\mathrm{al}}
\end{equation}
fitting into a fiber diagram
\begin{equation}
\begin{split}
\label{eqn:FXhataligned}
\xymatrix{
F_{\widehat{X}_g/X_g}\ar[r] \ar[d] & \mathcal{L}og^{\mathrm{al}} \ar[d] \\
X_g \ar[r] & \mathcal{L}og^{(\frac{1}{2}+\varepsilon)\mathrm{al}}
}
\end{split}
\end{equation}
with the morphism corresponding to the aligned log structure on
$F_{\widehat{X}_g/X_g}$ and vertical \'etale morphisms.

\subsection{Structure of $X_{g, \triangle}$}
\label{subsec:strXgtriangle}
The next result is an analogue of
Lemma \ref{lem:localstr}
for $X_{g, \triangle}$.

\begin{prop} \label{prop:fiberdiagramxtriangle}
There is a fiber diagram
\begin{equation}
\begin{split}
 \label{diagram-xtriangle}
\xymatrix{
X_{g, \triangle} \ar[r]^{\st} \ar[d]			& E_g \ar[d] \\
\mathcal{L}og^{(\frac{1}{2}+\varepsilon)\mathrm{al}}
\ar[r] & \mathring{\cA}^1
}
\end{split}
\end{equation}
where the left vertical arrow is the restriction to $X_{g, \triangle}$
of the morphism \eqref{eqn:semialignedonX}
and the right vertical arrow is given by the
finite \'etale cover $\widehat{D}_g \to D_g$.
\end{prop}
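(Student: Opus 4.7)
The plan is to establish the proposition by reducing, via étale descent along the cover $F_{\widehat{D}_g/E_g}\to E_g$, to an analog of Lemma \ref{lem:localstr}.

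First, I will verify that the square is 2-commutative. The left vertical morphism is \eqref{eqn:semialignedonX} restricted to $X_{g,\triangle}$. By the construction of the augmented semi-aligned log structure in \S \ref{subsec:semialignedXg} (via Proposition \ref{prop:augmentedsemialigned}) and the isomorphism \eqref{eqn:A1ringF}, the composition with $\mathcal{L}og^{(\frac{1}{2}+\varepsilon)\mathrm{al}}\to\mathring{\mathcal{A}}^1$ classifies the degree $2$ cover $\widehat{X}_g\to X_g^{\ge 1}$, which by the fiber square \eqref{eqn:extend} is the pullback along $\st$ of $\widehat{D}_g\to D_g$; the latter classifies the right vertical morphism $E_g\to\mathring{\mathcal{A}}^1$ of \eqref{eqn:FEA1}.

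Next, I will reduce to the statement after base change along $F_{\widehat{D}_g/E_g}\to E_g$, which is étale and surjective (by \eqref{eqn:FEA1} and the fact that $\mathcal{A}^1\to\mathring{\mathcal{A}}^1$ is étale surjective). Combining the fiber squares \eqref{eqn:extend} and \eqref{eqn:FXhataligned} with the right-hand fiber square of Proposition \ref{prop:Araugmentedsemialigned}, the base-changed diagram is identified with
\[
\xymatrix{
F_{\widehat{X}_g/X_g}\times_{X_g}X_{g,\triangle} \ar[r] \ar[d] & F_{\widehat{D}_g/E_g} \ar[d] \\
\mathcal{L}og^{\mathrm{al}} \ar[r] & \mathcal{A}^1
}
\]
where the left vertical morphism is the one corresponding to the aligned log structure on $F_{\widehat{X}_g/X_g}$ from \S \ref{subsec:semialignedXg}, restricted to the open substack over $X_{g,\triangle}$. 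By fpqc descent of the fiber-square property, it suffices to prove this is cartesian.

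Finally, I will establish the cartesian property by adapting the argument of Lemma \ref{lem:localstr}. Recall that the aligned log structure on $F_{\widehat{X}_g/X_g}$ was produced by applying Proposition \ref{prop:alignedlog} to the étale morphism $F_{\widehat{D}_g/E_g}\to\bar{\mathcal{M}}_g$ together with the divisor $\widehat{D}_g$. The comparison morphism
\[ F_{\widehat{X}_g/X_g}\times_{X_g}X_{g,\triangle}\to \mathcal{L}og^{\mathrm{al}}\times_{\mathcal{A}^1}F_{\widehat{D}_g/E_g} \]
is representable, and both sides restrict to isomorphisms with $F_{\widehat{D}_g/E_g}$ away from $\widehat{D}_g$, so the comparison is an isomorphism on the complement of a substack of codimension $2$. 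The target is smooth since $F_{\widehat{D}_g/E_g}\to\mathcal{A}^1$ is smooth (the divisor $\widehat{D}_g$ is smooth). Then Zariski-Nagata purity \cite[Thm.\ X.3.1]{sga1} shows the comparison is étale, and being bijective on geometric points and pointwise stabilizer preserving, Lemma \ref{lem:etale} finishes. The main obstacle is bookkeeping the compatibility between the several (semi-)aligned log structures, the degree $2$ covers, and the stacks $\mathring{\mathcal{A}}^r$ with non-separated diagonal, in order to legitimize the base-change identification in the second step.
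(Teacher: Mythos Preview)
Your proof is correct, but the paper's argument is more direct. Rather than first base-changing along the \'etale surjection $F_{\widehat{D}_g/E_g}\to E_g$ to reduce to the aligned setting, the paper applies the Zariski--Nagata/Lemma~\ref{lem:etale} machinery \emph{directly} to the comparison morphism
\[
\Psi\colon X_{g,\triangle}\to \mathcal{L}og^{(\frac{1}{2}+\varepsilon)\mathrm{al}}\times_{\mathring{\mathcal{A}}^1}E_g,
\]
observing that both $\st\colon X_{g,\triangle}\to E_g$ and $\mathcal{L}og^{(\frac{1}{2}+\varepsilon)\mathrm{al}}\to\mathring{\mathcal{A}}^1$ are isomorphisms in codimension~$1$, that the target is smooth because $E_g\to\mathring{\mathcal{A}}^1$ is smooth, and then checking bijectivity on geometric points and the pointwise stabilizer-preserving property. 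Your detour through \eqref{eqn:extend}, \eqref{eqn:FXhataligned}, and Proposition~\ref{prop:Araugmentedsemialigned} works, and has the virtue of literally reducing to the situation of Lemma~\ref{lem:localstr} (a smooth divisor in a Deligne--Mumford stack mapping to $\mathcal{A}^1$), but it essentially repeats bookkeeping already encoded in those fiber squares. The paper's approach shows that once the augmented semi-aligned formalism is in place, one can argue with $\mathring{\mathcal{A}}^1$ and $\mathcal{L}og^{(\frac{1}{2}+\varepsilon)\mathrm{al}}$ exactly as one did with $\mathcal{A}^1$ and $\mathcal{L}og^{\mathrm{al}}$ in Lemma~\ref{lem:localstr}, without any further descent.
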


\begin{proof}
Since the morphisms $X_{g, \triangle} \to E_g$ and 
$\mathcal{L}og^{(\frac{1}{2}+\varepsilon)\mathrm{al}} \to \mathring{\cA}^1$
are isomorphisms in 
codimension 1, so is the morphism
\begin{equation} \label{eqn-psi}
\Psi \colon X_{g, \triangle} \to \mathcal{L}og^{(\frac{1}{2}+\varepsilon)\mathrm{al}}
\times_{ \mathring{\cA}^1} E_g.
\end{equation}
The fiber product in \eqref{eqn-psi} is smooth
(since the right-hand morphism in \eqref{diagram-xtriangle} is smooth),
so it follows from Zariski-Nagata purity \cite[Thm.\ X.3.1]{sga1}
that $\Psi$ is \'etale.  
Since $\Psi$ is also bijective on geometric points and is 
pointwise stabilizer preserving, we conclude from 
Lemma \ref{lem:etale} that $\Psi$ is an isomorphism.
\end{proof}

\section{Proof of Theorem \ref{main-theorem}} \label{sec:proof}

\begin{thm} Conjecture \ref{conj1} holds for $\mathfrak{M}_g^{\ss}$.
\end{thm}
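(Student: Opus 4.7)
The plan is to apply Proposition \ref{prop:finiteetale} at each $k$-point $x=[C]\in \fM_g^{\ss}$ with linearly reductive automorphism group. If $C$ is stable, then $x$ lies in the open Deligne-Mumford substack $\bar{\cM}_g\subset\fM_g^{\ss}$, and Lemma \ref{lem:DM} supplies the required presentation directly. So I assume henceforth that $C$ is strictly semistable, with discrete invariants $m$, $n$, and $\br=(r_1,\ldots,r_n)$ as in \S \ref{subsec:eg}. Taking $X':=U$ from Corollary \ref{cor:second} gives an \'etale representable morphism $U\to\fM_g^{\ss}$, stabilizer preserving at a pre-image $x''$ of $[C]$, and setting
\[
X'':=U\times_{[\Spec(A)/H]}\Spec(B)
\]
via the finite \'etale cover $\Spec(B)\to[\Spec(A)/H]$ from Proposition \ref{prop:second} makes $X''\to X'$ a surjective finite \'etale morphism. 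This fulfills conditions (ii), (iii), and (iv) of Proposition \ref{prop:finiteetale}; the task is then to verify condition (i), namely that $X''\cong[U'/\GL_N]$ with $U'$ affine.

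To this end, I would chase $X''$ through the chain of fiber-product identifications. Because $\Spec(B)\to [(E_g^{\times m})^{\circ}/S_{\br}]$ factors through $(F_{\widehat{D}_g/E_g}^{\times m})^{\circ}$, and in particular lifts to $(E_g^{\times m})^{\circ}$ trivializing the $S_{\br}$-torsor, standard fiber-product manipulation gives
\[
X''\cong(X_{g,\triangle}^{\times m})^{\circ}\times_{(E_g^{\times m})^{\circ}}\Spec(B).
\]
Applying Proposition \ref{prop:fiberdiagramxtriangle} factor-by-factor identifies $(X_{g,\triangle}^{\times m})^{\circ}$ with a fiber product of $(\mathcal{L}og^{(\frac{1}{2}+\varepsilon)\mathrm{al}})^{\times m}$ and $(E_g^{\times m})^{\circ}$ over $(\mathring{\mathcal{A}}^1)^m$; and since the factorization through $F_{\widehat{D}_g/E_g}$ lifts $\Spec(B)\to(\mathring{\mathcal{A}}^1)^m$ to $(\mathcal{A}^1)^m$ (cf.\ \eqref{eqn:FEA1}), Proposition \ref{prop:Araugmentedsemialigned} applied coordinatewise yields
\[
X''\cong (\mathcal{L}og^{\mathrm{al}})^{\times m}\times_{(\mathcal{A}^1)^m}\Spec(B).
\]

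Near the image of $x''$, after possibly passing to an \'etale neighborhood of $\Spec(B)$ through which the morphism to $(\mathcal{L}og^{\mathrm{al}})^{\times m}$ factors through an \'etale chart $\prod_j\mathcal{A}^{s_j}\to(\mathcal{L}og^{\mathrm{al}})^{\times m}$ (available from the description of $\mathcal{L}og^{\mathrm{al}}$ in \cite{acfw}, used already in the proof of Theorem \ref{main-theorem-special-case}), and correspondingly shrinking $X'$ and $X''$, I obtain $X''\cong \prod_j\mathcal{A}^{s_j}\times_{(\mathcal{A}^1)^m}\Spec(B')$ with $\Spec(B')$ affine. Since $\mathcal{A}^{s}=[\mathbb{A}^{s}/\G_m^{s}]$, this is manifestly of the form $[U'/\G_m^{N}]$ with $U'$ affine; by \eqref{eqn:UGGLn} and the reductivity of $\G_m^N\subset\GL_N$, it may be rewritten as $[U'\times^{\G_m^N}\GL_N/\GL_N]$ with affine total space, completing the verification.

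The main obstacle I anticipate is that $\mathcal{L}og^{\mathrm{al}}$ is not itself a global quotient stack --- it is stratified with different $\G_m^r$-stabilizers in different codimensions --- so the final step requires careful \'etale refinement of $\Spec(B)$ together with compatible shrinkings of $X'$ and $X''$ while preserving the stabilizer-preserving property at $x''$, the $S_{\br}$-torsor simplification, and the surjective finite flat cover property throughout. Additional care is needed to ensure that the coordinatewise application of Propositions \ref{prop:fiberdiagramxtriangle} and \ref{prop:Araugmentedsemialigned} interacts correctly with the non-separated diagonal of $\mathring{\mathcal{A}}^1$ in the $m$-fold fiber product.
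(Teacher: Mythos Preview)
Your overall strategy---take $X'=U$ from Corollary~\ref{cor:second}, cover by $X''=U\times_{[\Spec(A)/H]}\Spec(B)$, and unwind the fiber products using Propositions~\ref{prop:fiberdiagramxtriangle} and~\ref{prop:Araugmentedsemialigned}---follows the paper's line, and your identification
\[
X''\;\cong\;(\mathcal{L}og^{\mathrm{al}})^{\times m}\times_{(\mathcal{A}^1)^m}\Spec(B)
\]
is correct. The gap is in the final step. This $X''$ is \emph{not of finite type}: $\mathcal{L}og^{\mathrm{al}}\to\mathcal{A}^1$ has infinite-type fiber over $B\G_m$ (all ranks $r\ge 1$ occur), so over the boundary locus of $\Spec(B)$ your $X''$ has infinitely many strata. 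Hence it cannot be $[U'/\GL_N]$ with $U'$ affine, and condition~(i) of Proposition~\ref{prop:finiteetale} fails as stated. Your proposed remedy---``passing to an \'etale neighborhood of $\Spec(B)$ through which the morphism to $(\mathcal{L}og^{\mathrm{al}})^{\times m}$ factors through an \'etale chart''---does not make sense: $\Spec(B)$ has no morphism to $(\mathcal{L}og^{\mathrm{al}})^{\times m}$ (only $X''$ does), and shrinking $\Spec(B)$ cannot cut down the log strata, which live in the other factor. Moreover, an \'etale refinement of $\Spec(B)$ does not descend to $[\Spec(A)/H]$, so there is no evident compatible shrinking of $X'$ that keeps $X''\to X'$ finite flat.

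The paper resolves this by making the chart choice part of the definition of $X'$ from the outset: it sets
\[
X'\;=\;[\mathring{\cA}^{\times\br}/S_{\br}]\times_{[(\mathcal{L}og^{(\frac12+\varepsilon)\mathrm{al}})^{\times m}/S_{\br}]}U,
\]
using the \'etale representable, stabilizer-preserving morphism $[\mathring{\cA}^{\times\br}/S_{\br}]\to[(\mathcal{L}og^{(\frac12+\varepsilon)\mathrm{al}})^{\times m}/S_{\br}]$ coming from Proposition~\ref{prop:Araugmentedsemialigned}. This is precisely the ``compatible shrinking'' you were looking for, done on the log side rather than on $\Spec(B)$. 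With this $X'$, the corresponding $X''$ simplifies directly to $\cA^{\times\br}\times_{(\mathcal{A}^1)^{\times m}}\Spec(B)$, which is $[U'/\G_m^N]$ with $U'$ affine, and no further refinement is needed.
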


\begin{proof}  Let $C$ be a strictly semistable curve with stabilization $C'$.  Let $m$ and $\br=(r_1,\ldots,r_n)$ be the combinatorial data assigned to $C$ as introduced in \S \ref{subsec:eg}.

We apply Proposition \ref{prop:second} and, with the notation from there, define
\begin{gather*}
X':=[\mathring{\cA}^{\times \br} / S_{\br}]
\times_{[(\mathcal{L}og^{(\frac{1}{2}+\varepsilon)\mathrm{al}})^{\times m}/S_{\br}]}
[(X_{g, \triangle}^{\times m})^{\circ} / S_{\br}]
\times_{[(E_g^{\times m})^{\circ}  / S_{\br}]} [\Spec(A)/H],\\
X'':=[\mathring{\cA}^{\times \br} / S_{\br}]
\times_{[(\mathcal{L}og^{(\frac{1}{2}+\varepsilon)\mathrm{al}})^{\times m}/S_{\br}]}
[(X_{g, \triangle}^{\times m})^{\circ} / S_{\br}]
\times_{[(E_g^{\times m})^{\circ}  / S_{\br}]} \Spec(B).
\end{gather*}
Notice, with the notation of Corollary \ref{cor:second},
$X'$ may be identified with the fiber product
$[\mathring{\cA}^{\times \br} / S_{\br}]
\times_{[(\mathcal{L}og^{(\frac{1}{2}+\varepsilon)\mathrm{al}})^{\times m}/S_{\br}]}U$
and hence (cf.\ Proposition \ref{prop:Araugmentedsemialigned}) admits a
morphism to $\fM_g^{\ss}$ satisfying conditions (iii) and (iv) of
Proposition \ref{prop:finiteetale}.
The cover $X''\to X'$ satisfies condition (ii).
So it remains to verify condition (i).

By Proposition \ref{prop:second}, $X''$ is isomorphic to
\[
\mathring{\cA}^{\times \br}
\times_{(\mathcal{L}og^{(\frac{1}{2}+\varepsilon)\mathrm{al}})^{\times m}}
(F_{\widehat{X}_{g, \triangle}/X_{g, \triangle}}^{\times m})^{\circ}
\times_{(F_{\widehat{D}_g/E_g}^{\times m})^{\circ}}\Spec(B), \]
where
$\widehat{X}_{g, \triangle}$ denotes the pre-image of $X_{g, \triangle}$ under
$\widehat{X}_g\to X_g$ and where
$(F_{\widehat{X}_{g, \triangle}/X_{g, \triangle}}^{\times m})^{\circ}$
denotes the pre-image of $(E_g^{\times m})^{\circ}$ in the
$m$-fold fiber product
$F_{\widehat{X}_{g, \triangle}/X_{g, \triangle}}^{\times m}$
over $\overline{\mathcal{M}}_g$.
Thanks to \eqref{eqn:FXhataligned} and
Proposition \ref{prop:Araugmentedsemialigned}, $X''$ may be identified with
\[ \cA^{\times \br}
\times_{(\mathcal{L}og^{\mathrm{al}})^{\times m}}
(F_{\widehat{X}_{g, \triangle}/X_{g, \triangle}}^{\times m})^{\circ}
\times_{(F_{\widehat{D}_g/E_g}^{\times m})^{\circ}}\Spec(B). \]

The next step is to obtain from diagrams
\eqref{eqn:FEA1}, \eqref{eqn:extend}, and
\eqref{eqn:FXhataligned}
and Propositions \ref{prop:Araugmentedsemialigned} and
\ref{prop:fiberdiagramxtriangle} the fiber diagram
\[
\xymatrix{
F_{\widehat{X}_{g, \triangle}/X_{g, \triangle}} \ar[r] \ar[d] &
F_{\widehat{D}_g/E_g} \ar[d] \\
\mathcal{L}og^{\mathrm{al}} \ar[r] & \mathcal{A}^1
}
\]
This yields further fiber diagrams
upon passing to $m$-fold fiber products (in the top row,
over $\overline{\mathcal{M}}_g$), and upon restricting to
pre-images of $(E_g^{\times m})^{\circ}$.
Now we have
\[ X''\cong \cA^{\times \br}\times_{(\mathcal{A}^1)^{\times m}}\Spec(B), \]
and we conclude, as in the proof of Theorem \ref{main-theorem-special-case},
by observing that the morphism from $\Spec(B)$
to $(\mathcal{A}^1)^{\times m}$ is
affine.
\end{proof}

\bibliography{refs}{}
\bibliographystyle{amsplain}

\end{document}